\numberwithin{equation}{section}
\theoremstyle{plain} 
\newtheorem*{thm*}{Theorem}
\newtheorem{thm}{Theorem}
\newtheorem{prop}[thm]{Proposition}
\newtheorem{lemma}[thm]{Lemma}
\newtheorem{cor}[thm]{Corollary}
\theoremstyle{definition} 
\newtheorem*{defn*}{Definition}
\theoremstyle{remark}
\newcommand{\bbC}{\mathbb{C}}
\newcommand{\bbZ}{\mathbb{Z}}
\newcommand{\bbN}{\mathbb{N}}
\newcommand{\bbV}{\mathbb{V}}
\newcommand{\T}{\mathbb{T}}
\newcommand{\mc}{\mathcal}
\newcommand{\mcZ}{{\mathcal{Z}({\bf n})}}
\newcommand{\mcS}{{\mathcal{S}}}
\newcommand{\mcP}{{\mathcal{P}}}
\newcommand{\mcG}{{\mathcal{G}}}
\newcommand{\mcH}{{\mathcal{H}}}
\newcommand{\mcV}{{\mathcal{V}}}
\newcommand{\mcE}{{\mathcal{E}}}
\newcommand{\mcO}{{\mathcal{O}}}
\newcommand{\mcPr}{{\mathcal{P}_\mathbb{R}}}
\newcommand{\la}{\lambda}
\newcommand{\be}{\beta}
\newcommand{\ga}{\gamma}
\newcommand{\ot   }{\otimes}
\renewcommand{\bot}{{\bigotimes}}
\newcommand{\dual}[1]{{\left( #1 \right)}^*}
\newcommand{\GL}{\mbox{GL}}
\newcommand{\G}{{\mbox{G}({\bf n})}}
\newcommand{\V}{{V({\bf n})}}
\newcommand{\mcT}[1]{{{\mathcal T}^{#1}({\bf n})}}
\newcommand{\bfn}{{\bf n}}
\newcommand{\bfb}{{\bf b}}
\newcommand{\bfs}{{\bf s}}
\newcommand{\End}{{\mbox{End}}}
\newcommand{\twoline}[2]{\genfrac{}{}{0pt}{}{#1}{#2}}
\begin{document}

\title{The measurement of quantum entanglement \\
and enumeration of graph coverings}
\date{28 March 2010}

\author{Michael W. Hero}
\author{Jeb F. Willenbring}
\author{Lauren Kelly Williams}

\address{University of Wisconsin - Milwaukee \\ Department of Mathematical Sciences \\ P.O. Box 0413, Milwaukee WI 53201}
\thanks{This research was supported in part by the University of Wisconsin - Milwaukee, Research Growth Initiative Grant, and by National Security Agency grant \# H98230-09-0054.}

\dedicatory{It is our honor to dedicate this article to Gregg Zuckerman.}

\begin{abstract}

We provide formulas for invariants defined on a tensor product of defining representations of unitary groups, under the action of the product group. This situation has a physical interpretation, as it is related to the quantum mechanical state space of a multi-particle system in which each particle has finitely many outcomes upon observation. Moreover, these invariant functions separate the entangled and unentangled states, and are therefore viewed as measurements of quantum entanglement.

When the ranks of the unitary groups are large, we provide a graph theoretic interpretation for the dimension of the invariants of a fixed degree. We also exhibit a bijection between isomorphism classes of finite coverings of connected simple graphs and a basis for the space of invariants. The graph coverings are related to branched coverings of surfaces.

\end{abstract}

\maketitle

\section{Introduction}

Understanding the orbit structure of a group action is among the central themes of mathematics.  That is, if a group $G$ acts on a set $\mc X$, one wishes to parameterize the set $\mc X/G = \{ \mcO_G(x) | x \in \mc X \}$, where $\mcO_G(x) = \{g \cdot x | g \in G\}$, in a natural way.  This paper addresses a specific situation in this broad theme:
Let $\mcH_1, \cdots, \mcH_r$ denote finite dimensional Hilbert spaces with $\dim \mcH_i = n_i$.  An unsolved problem is to explicitly parameterize the orbits in the tensor product, $\mcH = \mcH_1 \ot \cdots \ot \mcH_r$, under the action of the product of unitary groups, $U(\mcH_1) \times \cdots \times U(\mcH_r)$, given by
\[
    (u_1, \cdots, u_r) \cdot (v_1 \ot \cdots \ot v_r) = (u_1 v_1) \ot \cdots \ot (u_r v_r)
\] where $u_i \in U(\mcH_i)$ are unitary operators and $v_i \in \mcH_i$ for $i=1, \cdots, r$.

A slightly simpler, but still open, problem is to describe a set $\mathcal F$ of functions $f : \mcH \rightarrow \bbC$, which are invariant under the group $U(\mcH_1) \times \cdots \times U(\mcH_r)$ and separate the orbits.  That is, two tensors $x, y \in \mcH$ are in the same orbit if and only if $f(x)=f(y)$ for all $f \in \mathcal F$.
Such a set $\mathcal F$ does indeed exist in the algebra of polynomial functions on the underlying real vector space of $\mcH$ (see \cite{MW02}).

The motivation for studying this particular group action goes back to \cite{EPR}, and in the literature is often described in context with the physical effect known as ``quantum entanglement'', which has gained enormous popularity as the effect suggests vastly improved models of computation (see \cite{Feynman}).

In line with this nomenclature, the invariant functions on $\mcH$ are called ``measurements of quantum entanglement''.  The primary purpose of the present article is to point out some additional combinatorial/geometric structure related to an earlier work by the first two authors in \cite{HeroWillenbring}.
Specifically, the enumeration problems addressed in \cite{HeroWillenbring} can be translated into enumeration problems addressed in \cite{graph_enum}.
We recall the situation briefly and then provide some examples illustrating a correspondence between coverings of simple graphs and the measurements of quantum entanglement.

\subsection{General Setup}
We recall a general situation which includes the above problem.
Let $K$ be a compact Lie group acting $\bbC$-linearly on a
finite dimensional complex vector space $V$.  It is a difficult
problem in representation theory to provide a description of the
$K$-orbits in $V$.  One approach set out in \cite{MW02} and
\cite{W05} is to use the invariant theory of $K$ to separate orbits.
More precisely: Set $\mcPr(V)$ to be the algebra of complex valued
polynomial functions on the vector space $V$ when viewed as a real
vector space.  The group $K$ acts in the standard way on $\mcPr(V)$
by $g \cdot f(v) = f(g^{-1}v)$ for $g \in K$, $f \in \mcPr(V)$ and
$v \in V$.  Let the algebra of $K$-invariants in $\mcPr(V)$ be
denoted by $\mcPr(V)^K$. We have:

\begin{thm*} (c.f. Thm. 3.1 of \cite{MW02})
If $v,w \in V$ then $f(v) = f(w)$ for all $f \in \mcPr(V)^K$ if and
only if $\mcO_K(v) = \mcO_K(w)$; that is, $v$ and $w$ are in the same $K$-orbit.
\end{thm*}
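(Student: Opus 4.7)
The plan is to prove the two directions separately, with the easy direction following from the definition and the hard direction relying on the compactness of $K$ together with the Stone--Weierstrass theorem and the Reynolds (averaging) operator.

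For the easy direction, if $v$ and $w$ lie in a common $K$-orbit, then $w = g \cdot v$ for some $g \in K$, so for any $f \in \mcPr(V)^K$ one immediately gets $f(w) = f(g \cdot v) = f(v)$. So the only content lies in the converse. To prove it by contrapositive, I would assume $\mcO_K(v) \neq \mcO_K(w)$ and construct a $K$-invariant polynomial $F \in \mcPr(V)^K$ with $F(v) \neq F(w)$.

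The first step is to note that because $K$ is compact and acts continuously, the orbits $\mcO_K(v)$ and $\mcO_K(w)$ are continuous images of $K$, hence compact; being orbits, they are either equal or disjoint. So under the assumption they are disjoint compact subsets of $V$ (viewed as a finite-dimensional real vector space with its metric topology). By Urysohn's lemma (or Tietze extension on their union) there exists a continuous $\mathbb{C}$-valued function $\phi$ on $V$ with $\phi \equiv 1$ on $\mcO_K(v)$ and $\phi \equiv 0$ on $\mcO_K(w)$. Fix a compact $K$-stable set $X$ containing both orbits (e.g.\ a large closed ball, using that $\|g \cdot u\|$ is bounded for $g \in K$). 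The algebra $\mcPr(V)$ restricted to $X$ separates points of $X$ (real coordinate functions suffice) and is closed under complex conjugation, so by the complex Stone--Weierstrass theorem there exists $p \in \mcPr(V)$ with $|p - \phi| < 1/4$ uniformly on $X$. In particular $\operatorname{Re} p > 3/4$ on $\mcO_K(v)$ and $\operatorname{Re} p < 1/4$ on $\mcO_K(w)$.

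The final step is to produce a $K$-invariant polynomial out of $p$ by averaging. Let $dg$ denote normalized Haar measure on $K$ and define
\[
F(u) = \int_K p(g^{-1} \cdot u) \, dg, \qquad u \in V.
\]
Because the $K$-action is $\mathbb{R}$-linear, $p(g^{-1} \cdot u)$ is a polynomial in $u$ of degree at most $\deg p$ whose coefficients depend continuously on $g$; integrating against the (finite) Haar measure yields a polynomial in $u$ of the same bounded degree, so $F \in \mcPr(V)$, and by translation-invariance of $dg$ we have $F \in \mcPr(V)^K$. Since $g^{-1} \cdot v \in \mcO_K(v)$ for every $g$, the estimate on $\operatorname{Re} p$ persists under integration: $\operatorname{Re} F(v) > 3/4$ and $\operatorname{Re} F(w) < 1/4$, so $F(v) \neq F(w)$, completing the contrapositive.

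The main technical obstacle is the Stone--Weierstrass step, where one must be careful that the relevant algebra is $\mcPr(V)$ — polynomials on the underlying \emph{real} vector space — rather than holomorphic polynomials, which in general do \emph{not} separate $K$-orbits (they separate only the larger complex orbits of the complexified group). Once the real polynomial framework is in place, the rest is routine: compactness of orbits, Urysohn, Stone--Weierstrass, and the Reynolds operator all fit together transparently, and the compactness of $K$ is essential at two distinct points — to make the orbits closed in $V$ and to make the Haar-averaging integral convergent and polynomial-preserving.
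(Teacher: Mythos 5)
Your proof is correct, and it is essentially the standard argument: compactness of $K$ makes distinct orbits disjoint compact sets, Urysohn plus Stone--Weierstrass (applied to the self-adjoint algebra $\mcPr(V)$ of \emph{real}-polynomial functions, which is the crucial point you rightly flag) produces a separating polynomial, and Haar averaging yields the invariant. The paper itself gives no proof of this statement --- it is quoted from Theorem 3.1 of \cite{MW02} --- and your argument is the same one found there, so there is nothing to contrast.
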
 

\bigskip
Fix a sequence of positive integers $\bfn = (n_1, \cdots, n_r)$.
Let  \[ V(\bfn) = \bbC^{n_1} \ot \bbC^{n_2} \ot \cdots \ot
\bbC^{n_r} \] be the representation of $K(\bfn) = \prod_{i=1}^r U(n_i)$
under the standard action on each tensor\footnote{Here we tensor
over $\bbC$.} factor. (Here $U(n)$ denotes the group of $n \times n$ unitary\footnote{A \emph{unitary} matrix, $u$, is an invertible complex matrix s.t. $\overline{u}^t = u^{-1}$.} matrices.)

Well known results of Hilbert establish that the $K(\bfn)$-invariant
subalgebra of $\mcPr(V(\bfn))$ is finitely generated. In spite of this
result, our situation lacks a complete description of such
generators, except for certain small values of the parameter space,
$\bfn = (n_1, n_2, \cdots, n_r)$.  We do not solve this problem here, but make an encouraging first step:  We provide formulas for a set of polynomials that span the vector space of $K(\bfn)$-invariants in $\mcPr(\V)$.  Within a certain ``stable range'' this spanning set is linearly independent.

The $K(\bfn)$-invariant subalgebra inherits a gradation from
$\mcPr(V(\bfn))$.  Thus, let $\mcPr^d(V(\bfn))$ denote the subspace of degree
$d$ homogeneous polynomial functions contained in $\mcPr(V(\bfn))$.  We set
$\mcPr^d(V(\bfn))^{K(\bfn)} = \mcPr^d(V(\bfn)) \cap \mcPr(V(\bfn))^{K(\bfn)}$.  One can see easily that for $d$ odd, $\mcPr^d(V(\bfn))^{K(\bfn)} = (0)$ (see Lemma \ref{lemma_equal_01}).
However, the dimension of $\mcPr^d(V(\bfn))^{K(\bfn)}$ for even $d$ is more
subtle.  Set $h_m(\bfn) = \dim \mcPr^{2m}(V(\bfn))^{K(\bfn)}$.  For fixed $\bfn$, the formal power series in $q$,
\[
    h_0(\bfn) + h_1(\bfn) q + h_2(\bfn) q^2 + \cdots
\] is called the \emph{Hilbert series} of the $K(\bfn)$-invariant subalgebra.  As we shall see, calculating these coefficients is a step in the solution to the problem of finding a vector space basis of the invariants.

In \cite{HeroWillenbring}, it is shown that for fixed $d = 2m$ and $r$ the value of $h_m(\bfn)$ stabilizes as the components of $\bfn$ grow large.
Consequently, we can define
\[
    \widetilde h_{m,r} =
    \lim_{n_1 \rightarrow \infty} \lim_{n_2 \rightarrow \infty}
    \cdots \lim_{n_r \rightarrow \infty} h_m(n_1, \cdots, n_r).
\]
Several papers in the recent literature investigate Hilbert series related to measurements of quantum entanglement.  See, for example, \cites{MW02, W05}.  Despite the fact that the value of $h_m(\bfn)$ is not known in general, the
value of $\widetilde h_{m,r}$ has a surprisingly simple description,
which we present next.

We first set up the standard notation for partitions, which we
define as weakly decreasing finite sequences of positive integers.
We will always use lower case Greek letters to denote partitions. We
will write $\la \vdash m$ to indicate that $\la$ is a partition of
size $m$.  Lastly, if $\la$ has $a_1$ ones, $a_2$ twos, $a_3$ threes etc., let
\[
    z_\la = 1^{a_1} 2^{a_2} 3^{a_3} \cdots a_1! a_2! a_3! \cdots
\]

We have
\begin{thm*} (c.f. Thm. 1.1 of \cite{HeroWillenbring}) For any integers $m \geq 0$ and $r \geq 1$,
\begin{equation}\label{the_formula}
	\widetilde h_{m,r} = \sum_{\la \vdash m} z_\la^{r-2}
\end{equation}
\end{thm*}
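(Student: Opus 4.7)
The plan is to identify $h_m(\bfn)$ in the stable range with $\dim \End_{K(\bfn)}(S^m V(\bfn))$, decompose $S^m V(\bfn)$ via Schur--Weyl duality applied to each tensor factor, and then collapse the resulting sum of squared multiplicities using the second orthogonality relation for $S_m$-characters.

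First, I would reduce from real to holomorphic data. Since $K(\bfn)$ acts unitarily on $V(\bfn)$, the real cotangent space complexifies to $V(\bfn)^* \oplus \overline{V(\bfn)^*}$, so $\mcPr(V(\bfn)) \cong \bigoplus_{a,b \ge 0} S^a(V(\bfn)^*) \otimes S^b(\overline{V(\bfn)^*})$. The central circles $U(1) \subset U(n_i)$ act on the $(a,b)$-summand by the scalar $e^{i(b-a)\theta_i}$, forcing $a = b = m$ in total degree $2m$. Combined with the unitary isomorphism $\overline{V(\bfn)^*} \cong V(\bfn)$, this gives
\[
\mcPr^{2m}(V(\bfn))^{K(\bfn)} \;\cong\; \bigl[S^m V(\bfn)^* \otimes S^m V(\bfn)\bigr]^{K(\bfn)} \;\cong\; \End_{K(\bfn)}\bigl(S^m V(\bfn)\bigr).
\]

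Next, I would decompose $S^m V(\bfn) = (V(\bfn)^{\otimes m})^{S_m}$. Writing $V(\bfn)^{\otimes m} = \bigotimes_{i=1}^r (\bbC^{n_i})^{\otimes m}$ and applying Schur--Weyl duality to each factor, the diagonal $S_m$-action produces
\[
S^m V(\bfn) \;=\; \bigoplus_{\la_1,\ldots,\la_r \vdash m,\; \ell(\la_i) \le n_i}  c_{\la_1,\ldots,\la_r} \cdot \F{\la_1}{n_1} \otimes \cdots \otimes \F{\la_r}{n_r},
\]
with $c_{\la_1,\ldots,\la_r} = \dim([\la_1] \otimes \cdots \otimes [\la_r])^{S_m} = \tfrac{1}{m!}\sum_{\sigma \in S_m}\prod_{i=1}^r \chi^{\la_i}(\sigma)$ by the standard character formula for invariants. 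Once $n_i \ge m$ for every $i$, the length conditions $\ell(\la_i) \le n_i$ are vacuous, and the modules $\bigotimes_i \F{\la_i}{n_i}$ corresponding to distinct tuples are pairwise non-isomorphic irreducible $K(\bfn)$-modules, so Schur's lemma yields $h_m(\bfn) = \widetilde h_{m,r} = \sum_{\la_1,\ldots,\la_r \vdash m} c_{\la_1,\ldots,\la_r}^{\,2}$.

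The last step is orthogonality. Expanding the square,
\[
\widetilde h_{m,r} \;=\; \frac{1}{(m!)^2}\sum_{\sigma,\tau \in S_m}\prod_{i=1}^r\!\left(\sum_{\la_i \vdash m}\chi^{\la_i}(\sigma)\chi^{\la_i}(\tau)\right),
\]
and the second orthogonality relation gives $\sum_{\la \vdash m} \chi^\la(\sigma)\chi^\la(\tau) = z_{\mu(\sigma)}\,\delta_{[\sigma],[\tau]}$, where $\mu(\sigma)$ is the cycle type of $\sigma$ and $[\sigma]$ its conjugacy class. Grouping pairs $(\sigma,\tau)$ by their common cycle type $\la \vdash m$ and using that this class has size $m!/z_\la$ collapses the sum to $(m!)^{-2}\sum_\la (m!/z_\la)^2 z_\la^r = \sum_\la z_\la^{r-2}$. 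The main obstacle is the first step: keeping track of the real structure and verifying $\overline{V(\bfn)^*} \cong V(\bfn)$ as $K(\bfn)$-modules; once the problem has been translated into computing $\dim \End_{K(\bfn)}(S^m V(\bfn))$, everything afterward is routine character-theoretic bookkeeping on $S_m$ whose $r$-dependence enters only through the exponent on $z_{\mu(\sigma)}$.
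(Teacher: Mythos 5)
Your proof is correct, and it is essentially the argument behind the quoted result: this paper does not actually prove the formula but imports it from Theorem~1.1 of \cite{HeroWillenbring}, where the derivation is the same one you give --- reduce $\mcPr^{2m}(\V)^{\K}$ to $\End_{\K}(\mcS^m\V)$ using the central torus to force bidegree $(m,m)$, decompose $\mcS^m\V$ by Schur--Weyl duality on each tensor factor to get the squared multiplicities $c_{\la_1,\dots,\la_r}^2$, and collapse with column orthogonality (your identification of the stable range with the disappearance of the length constraints $\ell(\la_i)\le n_i$ is exactly right). One remark on how this sits in the present paper: by Burnside's lemma $\sum_{\la\vdash m} z_\la^{r-2}$ also counts the simultaneous-conjugation orbits $|S_m^{r-1}/S_m|$, and the paper uses the formula in that guise in Theorem~\ref{thm_basis_poly} to promote its spanning set $\{f_{\bfs}\}$ to a basis; that argument runs in the opposite direction (it assumes the dimension count), so your character-theoretic computation is the independent input the paper relies on, not something recoverable from its own combinatorial construction.
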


\subsection{A Combinatorial Interpretation.}  Let $S_m$ denote the symmetric group on the set $\{1, \cdots, m\}$.  The $r$-fold cartesian product, denoted,
\[
    S_m^r = \{ \bfs = (\sigma_1, \sigma_2, \cdots, \sigma_r) |  \sigma_i \in S_m \mbox{ for all } i \}
\] is acted upon by $S_m \times S_m$ under the action, $(\alpha, \beta) \cdot \bfs = \alpha \bfs \beta^{-1}$ where 
\[ \alpha \bfs \beta^{-1} = (\alpha \sigma_1 \beta^{-1}, \cdots, \alpha \sigma_r \beta^{-1} ).\]
 The orbits of this group action are the double cosets, $\Delta \backslash S_m^r / \Delta$ where 
\[ \Delta = \{(\sigma, \cdots, \sigma)| \sigma \in S_m\}. \]
Next, we shall see that the number of these double cosets is $\widetilde h_{m,r}$.

In \cite{HeroWillenbring}, it is shown that $\widetilde h_{m,r}$ is the number of orbits under the $S_m$-action of ``simultaneous conjugation'',
\[
    \ga \bfs \ga^{-1} = \left( \ga \sigma_1 \ga^{-1}, \cdots, \ga \sigma_{r-1} \ga^{-1} \right),
\] on $S_m^{r-1}$.  Denote these orbits by $\mcO = S_m^{r-1}/S_m$.  There exists a map $\theta : \Delta \backslash S_m / \Delta \rightarrow \mcO$ defined for $\bfs = (\sigma_1, \cdots, \sigma_m)$ by
\[
    \theta(\Delta \bfs \Delta) = \{ \ga( \sigma_1 \sigma_r^{-1}, \sigma_2 \sigma_r^{-1}, \cdots, \sigma_{r-1} \sigma_r^{-1} ) \ga^{-1} \; | \; \ga \in S_m \}.
\]  It is easy to see that $\theta$ is independent of the representative $\bfs$, and defines a bijective function from $\Delta \backslash S_m / \Delta$ to $\mcO$.

In Section \ref{sec_tensor}, we show how to define a spanning set for the invariant tensors on $\V \oplus \V^*$ parameterized by the set $S_m^r$.  This is a simple consequence of Schur-Weyl duality (see Theorem \ref{SWD}).  Then, in Section \ref{sec_symtensor} we show how this spanning set projects onto the invariants in the symmetric tensors on $\V \oplus \V^*$.  After projecting, many equalities arise, and we show how a spanning set for the invariants is naturally parameterized by $\Delta \backslash S_m^r / \Delta$ and $\mcO$.

\bigskip

The right hand side of Equation \ref{the_formula} may be interpreted in certain graph enumeration problems, which we recall following \cite{graph_enum}. Let $\mcG=\mcG(\mcV,\mcE)$ be a simple connected graph, with vertex set $\mcV$ and edge set $\mcE$. Let $\be(\mcG) = |\mcE|-|\mcV|+1$, which is the number of independent cycles in $\mcG$ (the first Betti number). Let $N(v)$ denote the neighborhood\footnote{The neighborhood $N(v)$ of $v \in \mcV$ is the set of all vertices in $\mcV$ adjacent to $v$.} of a vertex $v \in \mcV$.
A graph $\tilde{\mcG}$ is said to be a covering of $\mcG$ with projection $p: \tilde{\mcG} \to \mcG$ if there exists a surjection $p: \tilde{\mcV} \to \mcV$ such that $p|_{N(\tilde{v})}: N(\tilde{v}) \to N(v)$ is a bijection for any $v \in \mcV$ and $\tilde{v} \in p^{-1}(v)$. If $p$ is $n$-to-one, we say $p:\tilde{\mcG} \to \mcG$ is an $n$-fold covering. In the image below, $\tilde{\mcG}$ is a 2-fold covering of $\mcG$. We see that the neighborhood of a green vertex of $\tilde{\mcG}$ maps injectively onto the neighborhood of the green vertex of $\mcG$.

\begin{center}
\begin{tikzpicture}[scale=0.6]
\fill[cyan] (-1,2) circle (6mm); \fill[cyan] (1,2) circle (6mm); \fill[cyan] (0,0) circle (6mm); \fill[cyan] (2,1) circle (6mm); \fill[cyan] (2,-1) circle (6mm);
\node(a)[fill=red,circle,draw] at (-1,2) {}; \node(b)[fill=blue,circle,draw] at (1,2) {}; \node (c)[fill=green,circle,draw] at (0,0) {};
\node(d)[fill=yellow,circle,draw] at (2,1) {}; \node(e)[fill= magenta,circle,draw] at (4,1) {}; \node(f)[fill=green,circle,draw] at (6,0) {}; \node(g)[fill=red,circle,draw] at (5,2){}; \node(h)[fill=blue,circle,draw] at (7,2) {}; \node(i)[fill=yellow,circle,draw] at (4,-1) {}; \node(j)[fill=magenta,circle,draw] at (2,-1) {};
\node(l1)[] at  (-2,1) {$\tilde{\mcG}$:};
\draw[line width = 4mm, cyan] (a) -- (c) -- (d);
\draw[line width = 4mm, cyan] (b) -- (c) -- (j);
\draw[very thick] (a) -- (b) -- (c) -- (d) -- (e) -- (f) -- (g) -- (h) -- (f) -- (i) -- (j) -- (c) -- (a);

\fill[cyan] (12,2.5) circle (6mm); \fill[cyan] (14,2.5) circle (6mm); \fill[cyan] (13,0.5) circle (6mm); \fill[cyan] (14,-1.5) circle (6mm); \fill[cyan] (12,-1.5) circle (6mm);
\node(a)[fill=red,circle,draw] at (12,2.5) {}; \node(b)[fill=blue,circle,draw] at (14,2.5) {}; \node (c)[fill=green,circle,draw] at (13,0.5) {};
\node(d)[fill=yellow,circle,draw] at (14,-1.5) {}; \node(e)[fill= magenta,circle,draw] at (12,-1.5) {};
\draw[line width = 4mm, cyan] (a) -- (c) -- (d);
\draw[line width = 4mm, cyan] (b) -- (c) -- (e);
\draw[very thick] (a) -- (b) -- (c) -- (d) -- (e) -- (c) -- (a);
\node(l2)[] at  (11,1) {$\mcG$:};
\end{tikzpicture}
\end{center}

Two coverings $p_i: \tilde{\mcG}_i \to \mcG$, $i = 1,2$ are said to be isomorphic if there exists a graph isomorphism $\Phi: \tilde{\mcG}_1 \to \tilde{\mcG}_2$ such that the following diagram commutes:
\begin{center}
\begin{tikzpicture}[scale=0.8]
\node(a)[] at (-2,2) {$\tilde{\mcG_1}$}; \node(b)[] at (2,2) {$\tilde{\mcG_2}$}; \node (c)[] at (0,0) {$\mcG$};
\path (a) edge[thick, -stealth] node[anchor=north,above]{$\Phi$} (b);
\path (a) edge[thick, -stealth] node[anchor=west,below]{$p_1$ \ } (c);
\path (b) edge[thick, -stealth] node[anchor=east,below]{\ \ $p_2$} (c);
\end{tikzpicture}
\end{center}
The right hand side of Equation \ref{the_formula} is equal to the number of isomorphism classes of $m$-fold coverings of $\mcG$ with $\be(\mcG)=r-1$ (see \cite{KwakLee}).

In light of this graphical interpretation of Equation \ref{the_formula}, one anticipates a bijective correspondence between finite graph coverings and measurements of quantum entanglement. Indeed, such a correspondence exists, which we illustrate in the next section.

\subsection{The correspondence}

If $V$ is a complex vector space then we denote\footnote{Here we are viewing $V$ as a \emph{complex} space rather than a \emph{real} space, as we do in defining $\mcPr(V)$.} the complex valued polynomial functions on $V$ by $\mcP(V)$.  Suppose that a compact Lie group, $K$, acts $\bbC$-linearly on $V$.  The $K$-action on $V$ gives rise to an action on $\mcP(V)$ by $k \cdot f(v) = f(k^{-1}v)$ for $k \in K$, $f \in \mcP(V)$ and $v \in V$.   Both $\mcP(V)$ and $\mcPr(V)$ are complex vector spaces with a natural gradation by degree.  As a graded representation, $\mcPr(V) \cong \mcP\left(V \oplus \overline{V} \right)$, where $\overline{V}$ denotes the complex vector space with the opposite complex structure (see \cites{MW02}).  Let $V^*$ refer to the representation on the complex valued linear functionals on $V$ defined by $(k \cdot \la)(v) = \la(k^{-1} v)$ for $v \in V$, $\la \in V^*$, and $k \in K$. As a representation of $K$, $\overline V$ is equivalent to $V^*$.

In what is to follow, we will complexify the compact group, $K$, to a complex reductive linear algebraic group.  All representations of $G$ will be assumed to be regular.  That is, the matrix coefficients are regular functions on the underlying affine variety $G$.  An irreducible regular representation restricts to an irreducible complex representation of $K$.  Furthermore, since $K$ is Zariski dense in $G$, regular representations of $G$ (and hence $G$-invariants) are determined on $K$.  Note that, $G=\GL(n)$ when $K=U(n)$.

We now specialize to $V = V(\bfn) = \bbC^{n_1} \ot \cdots \ot \bbC^{n_r}$, and set up notation for the coordinates in $V$ and $V^*$.  For positive integers $k$ and $n$, let $\mbox{Mat}_{n,k}$ denote the vector space of $n \times k$ complex matrices.  Let $E^i_j \in \mbox{Mat}_{n,k}$ denote the matrix with entry in row $i$ and column $j$ equal to 1 and all other entries 0.  The group of $n \times n$ invertible matrices with complex number entries will be denoted by $\GL(n)$.  This group acts on $\mbox{Mat}_{n,k}$ by multiplication on the left.  We identify $\bbC^n = \mbox{Mat}_{n,1}$, which has a distinguished ordered basis consisting of $e_i = E^i_1 \in \mbox{Mat}_{n,1}$ for $i=1, \cdots, n$.

In the case of $G = \GL(n)$ we will identify $\dual{\bbC^n}$ with the representation on $\mbox{Mat}_{1,n}$ defined by the action $g \cdot v = v g^{-1}$ for $v \in \mbox{Mat}_{1,n}$ and $g \in \GL(n)$.  Set $e^i = E^1_i \in \mbox{Mat}_{1,n}$ for $i = 1, \cdots, n$. Then, $(e^1, \cdots, e^n)$ is an ordered basis for $\dual{\bbC^n}$, dual to $(e_1, \cdots, e_n)$.

Arbitrary tensors in $\V$ and $\V^*$ are of the form
\[ \sum x^{i_1 \cdots i_{r}} e_{i_1} \otimes \cdots \otimes e_{i_{r}} \in \V,\]
and
\[ \sum y_{i_1 \cdots i_{r}} e^{i_1} \otimes \cdots \otimes e^{i_{r}} \in \V^*, \]
where $x^{i_1 \cdots i_{r}}$ and $y_{i_1 \cdots i_{r}}$ are complex scalars.  We may view the variables $x^{i_1 \cdots i_{r}}$ and $y_{i_1 \cdots i_{r}}$ as degree 1 polynomial functions in $\mcPr(\V)$, where $y_{i_1 \cdots i_{r}}$ are the complex conjugates of $x^{i_1 \cdots i_{r}}$.

Let $\mcG$ be a connected simple graph with $\be(\mcG)=r-1$.  In \cite{graph_enum} the isomorphism classes of $m$-fold covers of $\mcG$ are parameterized by the orbits in $S_m^{r-1} = S_m \times \cdots \times S_m$ ($r-1$ factors) under the conjugation action of $S_m$.
Thus, one expects to form a basis\footnote{In general, one obtains a spanning set for the invariants.  However, if $n_i \geq m$ for all $i$ then we have a basis for the degree $2m$ invariants.} element of the space of degree $2m$ invariants from a choice, up to conjugation, of $r-1$ permutations.  Let
\[
[\sigma_1, \cdots, \sigma_{r-1}] = \left\{ \tau (\sigma_1, \cdots, \sigma_{r-1}) \tau^{-1}: \tau \in S_m \right\}
\]
be such a choice.  We present now an invariant associated with $[\sigma_1, \cdots, \sigma_{r-1}]$.

We define $f_{[\sigma_1, \cdots, \sigma_{r-1}]}$ as the sum over the indices
\[
I_1 = (i^{(1)}_1 i^{(1)}_2 \cdots i^{(1)}_{r}),
\cdots,
I_m = (i^{(m)}_1 i^{(m)}_2 \cdots i^{(m)}_{r})
\]
where $1 \leq i^{(j)}_k \leq n_k$ (with $j=1, \cdots, m$) of
\[
x^{ I_1 } \cdots x^{ I_m }
y_{ i^{(\sigma_1(1))}_{1} \cdots i^{(\sigma_{r-1}(1))}_{r-1} i^{(1)}_{r} }
\cdots
y_{ i^{(\sigma_1(m))}_{1} \cdots i^{(\sigma_{r-1}(m))}_{r-1} i^{(m)}_{r}. }
\]

We simultaneously parameterize the degree $2m$ polynomial invariants and $m$-fold coverings of simple connected graphs in the following way. Given a double coset $S^r_m$, applying $\theta$, one obtains an ($r$-1)-tuple of permutations $(\sigma_1, \cdots, \sigma_{r-1})$.
Combinatorially, we can encode the $S_m$-orbit of ($\sigma_1, \cdots, \sigma_{r-1}$) under the simultaneous conjugation action by coloring each permutation.  This action ``forgets'' the labels of the domain and range of each permutation.  The resulting combinatorial data takes the form of an unlabeled directed graph with edges colored by $r-1$ colors.

We will now exhibit this process in the case $r=m=3$. Consider $S_3(\tilde{\sigma_1},\tilde{\sigma_2},\tilde{\sigma_3})S_3$ where $\tilde{\sigma_1}=(1 \ 3 \ 2), \tilde{\sigma_2} = (2 \ 3), \tilde{\sigma_3} = (1 \ 3)$. Then 
\[ \theta(\tilde{\sigma_1},\tilde{\sigma_2},\tilde{\sigma_3}) = (\tilde{\sigma_1}\tilde{\sigma_3}^{-1},\tilde{\sigma_2}\tilde{\sigma_3}^{-1}) = (\sigma_1,\sigma_2) = ((1 \ 2),(1 \ 2 \ 3)). \]

\begin{center}
\begin{tikzpicture}[scale=.65,decoration={markings, mark=at position 0.55 with {\arrow[black]{stealth};}}]
	 \node (a1) {1};
	 \node (b1) [below=0.2cm of a1]{2};
	 \node (c1) [below=0.2cm of b1]{3};
	
	 \node (e1) [right=0.8cm of a1]{1};
	 \node (f1) [right=0.8cm of b1]{2};
	 \node (g1) [right=0.8cm of c1]{3};
	
	 \node (i1) [right=0.8cm of e1]{1};
	 \node (j1) [right=0.8cm of f1]{2};
	 \node (k1) [right=0.8cm of g1]{3};
	
	 \draw[thick, blue, -stealth, shorten >=-2pt] (a1) to node[auto] {} (f1);
	 \draw[thick, blue, -stealth, shorten >=-2pt] (b1) to node[auto] {} (e1);
	 \draw[thick, blue, -stealth, shorten >=-2pt] (c1) to node[auto] {} (g1);

	 \draw[thick, red, -stealth, shorten >=-2pt] (e1) to node[auto] {} (j1);
	 \draw[thick, red, -stealth, shorten >=-2pt] (f1) to node[auto] {} (k1);
	 \draw[thick, red, -stealth, shorten >=-2pt] (g1) to node[auto] {} (i1);

	 \node (d1) [right=0.2cm of a1]{};
	 \node (d2) [right=0.2cm of e1]{};
	 \node (s) [above=0.1cm of e1]{$((1 \ 2),(1 \ 2 \ 3))$};
	
	 \node (a2)  [right=2.75cm of i1][fill,circle,inner sep=1.2pt] {};
	 \node (b2) [below=0.6cm of a2][fill,circle,inner sep=1.2pt] {};
	 \node (c2) [below=0.6cm of b2][fill,circle,inner sep=1.2pt] {};

	 \node (e2) [right=0.4cm of a2][fill,circle,inner sep=1.2pt] {};
	 \node (f2) [right=0.4cm of b2][fill,circle,inner sep=1.2pt] {};
	 \node (g2) [right=0.4cm of c2][fill,circle,inner sep=1.2pt] {};

	 \node (i2) [right=0.4cm of e2][fill,circle,inner sep=1.2pt] {};
	 \node (j2) [right=0.4cm of f2][fill,circle,inner sep=1.2pt] {};
	 \node (k2) [right=0.4cm of g2][fill,circle,inner sep=1.2pt] {};
	 \node (t) [above=0.35cm of e2]{$\tau((1 \ 2),(1 \ 2 \ 3))\tau^{-1}$};
	
	 \draw[thick, blue, -stealth] (a2) .. controls +(150:1) and +(180:1) .. (f2);
	 \draw[thick, blue, -stealth] (b2) .. controls +(210:1) and +(180:1) .. (e2);
	 \draw[thick, blue, -stealth] (c2)  .. controls +(130:1) and +(130:1) .. (g2);

	 \draw[thick, red, -stealth] (e2) .. controls +(110:1) and +(90:1) .. (j2);
	 \draw[thick, red, -stealth] (f2) .. controls +(110:1) and +(90:1) .. (k2);
	 \draw[thick, red, -stealth] (g2).. controls +(20:2) and +(40:1) .. (i2);

	 \node (a3) [right=3.5cm of i2][fill,circle,inner sep=1.2pt] {};
	 \node (b3) [below=0.6cm of a3][fill,circle,inner sep=1.2pt] {};
	 \node (c3) [below=0.6cm of b3][fill,circle,inner sep=1.2pt] {};
	 \node (u) [above=0.85cm of a3]{3-fold Cover of};
	 \node (v) [above=0.35cm of a3]{ a "Figure 8"};

	 \draw[thick, blue, -stealth] (a3) .. controls +(200:1) and +(170:.5) .. (b3);
	 \draw[thick, blue, -stealth] (b3) .. controls +(220:1.5) and +(140:1.25) .. (a3);
	 \draw[thick, blue, -stealth] (c3)  .. controls +(180:1) and +(90:1) .. (c3);

	 \draw[thick, red, -stealth] (a3) .. controls +(340:1) and +(10:0.5) .. (b3);
	 \draw[thick, red, -stealth] (b3) .. controls +(340:1) and +(10:0.5) .. (c3);
	 \draw[thick, red, -stealth] (c3).. controls +(320:2) and +(40:1.5) .. (a3);

	\end{tikzpicture}
\end{center}

Using the invariant defined above, we have $f_{[\sigma_1, \sigma_{2}]}$ is the sum of terms of the form
\begin{align*}
x^{i^{(1)}_1i^{(1)}_2i^{(1)}_3} & x^{i^{(2)}_1i^{(2)}_2i^{(2)}_3}x^{i^{(3)}_1i^{(3)}_2i^{(3)}_3}
					 y_{ i^{(\sigma_1(1))}_{1} i^{(\sigma_{2}(1))}_{2} i^{(1)}_{3} }
					 y_{ i^{(\sigma_1(2))}_{1} i^{(\sigma_{2}(2))}_{2} i^{(2)}_{3} }
					 y_{ i^{(\sigma_1(3))}_{1} i^{(\sigma_{2}(3))}_{2} i^{(3)}_{3} } \\
				      &= x^{i^{(1)}_1i^{(1)}_2i^{(1)}_3}x^{i^{(2)}_1i^{(2)}_2i^{(2)}_3}x^{i^{(3)}_1i^{(3)}_2i^{(3)}_3}
					 y_{ i^{(2)}_{1} i^{(2)}_{2} i^{(1)}_{3} }
					 y_{ i^{(1)}_{1} i^{(3)}_{2} i^{(2)}_{3} }
					 y_{ i^{(3)}_{1} i^{(1)}_{2} i^{(3)}_{3} }
\end{align*}

All possible diagrams for the $r=m=3$ case are shown below:

\bigskip

\begin{center}
\begin{tikzpicture}[scale=.85,decoration={markings, mark=at position 0.55 with {\arrow[black]{stealth};}}]
 \node (a1) [fill,circle,inner sep=1.3pt] {};
 \node (o1) [right=.4cm of a1] {}; 
 \node (b1) [above=.7cm of o1][fill,circle,inner sep=1.3pt] {};
 \node (c1) [right=.4cm of o1][fill,circle,inner sep=1.3pt] {};
 \node (a2) [right=3cm of a1][fill,circle,inner sep=1.3pt] {};
 \node (b2) [right=3cm of b1][fill,circle,inner sep=1.3pt] {};
 \node (c2) [right=3cm of c1][fill,circle,inner sep=1.3pt] {};
 \node (a3) [right=3cm of a2][fill,circle,inner sep=1.3pt] {};
 \node (b3) [right=3cm of b2][fill,circle,inner sep=1.3pt] {};
 \node (c3) [right=3cm of c2][fill,circle,inner sep=1.3pt] {};
 \node (a4) [right=3cm of a3][fill,circle,inner sep=1.3pt] {};
 \node (b4) [right=3cm of b3][fill,circle,inner sep=1.3pt] {};
 \node (c4) [right=3cm of c3][fill,circle,inner sep=1.3pt] {};
 \node (a5) [below=2.5cm of a1][fill,circle,inner sep=1.3pt] {};
 \node (b5) [below=2.5cm of b1][fill,circle,inner sep=1.3pt] {};
 \node (c5) [below=2.5cm of c1][fill,circle,inner sep=1.3pt] {};
 \node (d1) [right=.8cm of c5] {}; 
 \node (a6) [below=2.5cm of a2][fill,circle,inner sep=1.3pt] {};
 \node (b6) [below=2.5cm of b2][fill,circle,inner sep=1.3pt] {};
 \node (c6) [below=2.5cm of c2][fill,circle,inner sep=1.3pt] {};
 \node (a7) [below=2.5cm of a3][fill,circle,inner sep=1.3pt] {};
 \node (b7) [below=2.5cm of b3][fill,circle,inner sep=1.3pt] {};
 \node (c7) [below=2.5cm of c3][fill,circle,inner sep=1.3pt] {};
 \node (a8) [below=2.5cm of a4][fill,circle,inner sep=1.3pt] {};
 \node (b8) [below=2.5cm of b4][fill,circle,inner sep=1.3pt] {};
 \node (c8) [below=2.5cm of c4][fill,circle,inner sep=1.3pt] {};
 \node (o9) [below=2.3cm of d1] {}; 
 \node (a9) [left=.4cm of o9][fill,circle,inner sep=1.3pt] {};
 \node (b9) [above=.7cm of o9][fill,circle,inner sep=1.3pt] {};
 \node (c9) [right=.4cm of o9][fill,circle,inner sep=1.3pt] {};
 \node (a10) [right=3cm of a9][fill,circle,inner sep=1.3pt] {};
 \node (b10) [right=3cm of b9][fill,circle,inner sep=1.3pt] {};
 \node (c10) [right=3cm of c9][fill,circle,inner sep=1.3pt] {};
 \node (a11) [right=3cm of a10][fill,circle,inner sep=1.3pt] {};
 \node (b11) [right=3cm of b10][fill,circle,inner sep=1.3pt] {};
 \node (c11) [right=3cm of c10][fill,circle,inner sep=1.3pt] {};

 \draw[very thick,blue,rotate=120,postaction=decorate] 	(a1) .. controls +(35:1cm) and +(145:1cm) .. (a1); 
 \draw[very thick,blue,postaction=decorate] 						(b1) .. controls +(35:1cm) and +(145:1cm) .. (b1);
 \draw[very thick,blue,rotate=240,postaction=decorate] 	(c1) .. controls +(35:1cm) and +(145:1cm) .. (c1);
 \draw[very thick,red,rotate=120,postaction=decorate] 	(a1) .. controls +(-145:1cm) and +(-35:1cm) .. (a1);
 \draw[very thick,red,postaction=decorate] 							(b1) .. controls +(-145:1cm) and +(-35:1cm) .. (b1);
 \draw[very thick,red,rotate=240,postaction=decorate] 	(c1) .. controls +(-145:1cm) and +(-35:1cm) .. (c1);

 \draw[very thick,blue,rotate=120,postaction=decorate] 	(a2) .. controls +(-145:1cm) and +(-35:1cm) .. (a2); 
 \draw[very thick,blue,postaction=decorate] 						(b2) to[bend left=30] node[auto] {} (c2);
 \draw[very thick,blue,postaction=decorate] 						(c2) to[bend left=30] node[auto] {} (b2);
 \draw[very thick,red,rotate=120,postaction=decorate] 	(a2) .. controls +(35:1cm) and +(145:1cm) .. (a2);
 \draw[very thick,red,postaction=decorate] 							(b2) .. controls +(35:1cm) and +(145:1cm) .. (b2);
 \draw[very thick,red,rotate=240,postaction=decorate] 	(c2) .. controls +(35:1cm) and +(145:1cm) .. (c2);

 \draw[very thick,blue,rotate=120,postaction=decorate] 	(a3) .. controls +(35:1cm) and +(145:1cm) .. (a3); 
 \draw[very thick,blue,postaction=decorate] 						(b3) .. controls +(35:1cm) and +(145:1cm) .. (b3);
 \draw[very thick,blue,rotate=240,postaction=decorate]	(c3) .. controls +(35:1cm) and +(145:1cm) .. (c3);
 \draw[very thick,red,rotate=120,postaction=decorate] 	(a3) .. controls +(-145:1cm) and +(-35:1cm) .. (a3);
 \draw[very thick,red,postaction=decorate] 							(b3) to[bend left=30] node[auto] {} (c3);
 \draw[very thick,red,postaction=decorate] 							(c3) to[bend left=30] node[auto] {} (b3);

 \draw[very thick,blue,rotate=120,postaction=decorate] 	(a4) .. controls +(35:1cm) and +(145:1cm) .. (a4); 
 \draw[very thick,blue,postaction=decorate] 						(b4) to[bend left=50] node[auto] {} (c4);
 \draw[very thick,blue,postaction=decorate] 						(c4) to[bend left=50] node[auto] {} (b4);
 \draw[very thick,red,rotate=120,postaction=decorate] 	(a4) .. controls +(-145:1cm) and +(-35:1cm) .. (a4);
 \draw[very thick,red,postaction=decorate] 							(b4) to[bend right=20] node[auto] {} (c4);
 \draw[very thick,red,postaction=decorate] 							(c4) to[bend right=20] node[auto] {} (b4);

 \draw[very thick,blue,rotate=120,postaction=decorate] 	(a5) .. controls +(35:1cm) and +(145:1cm) .. (a5); 
 \draw[very thick,blue,postaction=decorate] 						(b5) .. controls +(35:1cm) and +(145:1cm) .. (b5);
 \draw[very thick,blue,rotate=240,postaction=decorate]	(c5) .. controls +(35:1cm) and +(145:1cm) .. (c5);
 \draw[very thick,red,postaction=decorate] 							(a5) to[bend left=30] node[auto] {} (b5);
 \draw[very thick,red,postaction=decorate]							(b5) to[bend left=30] node[auto] {} (c5);
 \draw[very thick,red,postaction=decorate] 							(c5) to[bend left=30] node[auto] {} (a5);

 \draw[very thick,blue,postaction=decorate] 						(a6) to[bend left=30] node[auto] {} (b6); 
 \draw[very thick,blue,postaction=decorate] 						(b6) to[bend left=30] node[auto] {} (c6);
 \draw[very thick,blue,postaction=decorate] 						(c6) to[bend left=30] node[auto] {} (a6);
 \draw[very thick,red,rotate=120,postaction=decorate] 	(a6) .. controls +(35:1cm) and +(145:1cm) .. (a6);
 \draw[very thick,red,postaction=decorate] 							(b6) .. controls +(35:1cm) and +(145:1cm) .. (b6);
 \draw[very thick,red,rotate=240,postaction=decorate] 	(c6) .. controls +(35:1cm) and +(145:1cm) .. (c6);

 \draw[very thick,blue,rotate=120,postaction=decorate] 	(a7) .. controls +(35:1cm) and +(145:1cm) .. (a7); 
 \draw[very thick,blue,postaction=decorate] 						(b7) to[bend right=20] node[auto] {} (c7);
 \draw[very thick,blue,postaction=decorate] 						(c7) to[bend right=50] node[auto] {} (b7);
 \draw[very thick,red,postaction=decorate] 							(a7) to[bend left=30] node[auto] {} (b7);
 \draw[very thick,red,postaction=decorate] 							(b7) to[bend left=20] node[auto] {} (c7);
 \draw[very thick,red,postaction=decorate] 							(c7) to[bend left=30] node[auto] {} (a7);

 \draw[very thick,blue,postaction=decorate] 						(a8) to[bend left=30] node[auto] {} (b8); 
 \draw[very thick,blue,postaction=decorate] 						(b8) to[bend left=20] node[auto] {} (c8);
 \draw[very thick,blue,postaction=decorate] 						(c8) to[bend left=30] node[auto] {} (a8);
 \draw[very thick,red,rotate=120,postaction=decorate] 	(a8) .. controls +(35:1cm) and +(145:1cm) .. (a8);
 \draw[very thick,red,postaction=decorate] 							(b8) to[bend right=20] node[auto] {} (c8);
 \draw[very thick,red,postaction=decorate] 							(c8) to[bend right=50] node[auto] {} (b8);

 \draw[very thick,blue,postaction=decorate] 						(a9) to[bend left=20] node[auto] {} (c9); 
 \draw[very thick,blue,postaction=decorate] 						(b9) .. controls +(35:1cm) and +(145:1cm) .. (b9);
 \draw[very thick,blue,postaction=decorate] 						(c9) to[bend left=30] node[auto] {} (a9);
 \draw[very thick,red,rotate=120,postaction=decorate] 	(a9) .. controls +(35:1cm) and +(145:1cm) .. (a9);
 \draw[very thick,red,postaction=decorate] 							(b9) to[bend left=40] node[auto] {} (c9);
 \draw[very thick,red,postaction=decorate] 							(c9) to[bend left=20] node[auto] {} (b9);

 \draw[very thick,blue,postaction=decorate] 	(a10) to[bend left=30] node[auto] {} (b10); 
 \draw[very thick,blue,postaction=decorate] 	(b10) to[bend left=30] node[auto] {} (c10);
 \draw[very thick,blue,postaction=decorate] 	(c10) to[bend left=30] node[auto] {} (a10);
 \draw[very thick,red,postaction=decorate] 		(c10) to[bend right=15] node[auto] {} (a10);
 \draw[very thick,red,postaction=decorate] 		(a10) to[bend right=15] node[auto] {} (b10);
 \draw[very thick,red,postaction=decorate] 		(b10) to[bend right=15] node[auto] {} (c10);

 \draw[very thick,blue,postaction=decorate] 	(a11) to[bend left=30] node[auto] {} (b11); 
 \draw[very thick,blue,postaction=decorate] 	(b11) to[bend left=30] node[auto] {} (c11);
 \draw[very thick,blue,postaction=decorate] 	(c11) to[bend left=30] node[auto] {} (a11);
 \draw[very thick,red,postaction=decorate] 		(a11) to[bend left=15] node[auto] {} (c11);
 \draw[very thick,red,postaction=decorate] 		(b11) to[bend left=15] node[auto] {} (a11);
 \draw[very thick,red,postaction=decorate] 		(c11) to[bend left=15] node[auto] {} (b11);
\end{tikzpicture}
\end{center}

Each coloring of the directed graphs corresponds to an isomorphism class of $m$-fold covering of a connected simple graph $G$ with $\be(G) = r-1$ (see \cite{graph_enum}).  We illustrate this correspondence for $m=2$, and $r=2,3,4$.  In the following table, the simple graph is homotopic to a bouquet of loops (on the left) and the possible graph coverings are on the right.  The colors and orientations determine the covering map.
The corresponding $K(\bfn)$-invariants are written out explicitly following the Einstein summation convention.

{\tiny

\begin{table}[h!t!]
    \centering
    \begin{tabular}{@{}c@{} | l}
\toprule
    $\beta(G) = 1$, $m=2$ & $r=2$, $m=2$
\tabularnewline
			\begin{tikzpicture}[scale=0.75,node distance = 2 cm,decoration={markings, mark=at position .55 with {\arrow[black]{stealth};}}]
 				\node (a)  {};
 				\node (b) [left of=a][fill,circle,inner sep=1.7pt] {};
 				\draw[very thick,blue,postaction=decorate,shorten >=-3pt] (b) .. controls +(90:1cm) and +(90:1cm) .. (a);
 				\draw[very thick,blue,shorten >=-3.5pt] (b) .. controls +(-90:1cm) and +(-90:1cm) .. (a);
				\end{tikzpicture}
      &
      \begin{tikzpicture}[scale=0.6,node distance = 1 cm,decoration={markings, mark=at position 0.55 with {\arrow[black]{stealth};}}]
 				\node (a) [fill,circle,inner sep=1.4pt] {};
 				\node (b) [below of=a][fill,circle,inner sep=1.4pt] {};
 				\node (1) [above left of=b] {(1)};
 				\node (c) [right of=a] {};
 				\node (d) [right of=b] {};
 				\node (e) [right of=c][fill,circle,inner sep=1.4pt] {};
 				\node (f) [right of=d][fill,circle,inner sep=1.4pt] {};
 				\node (2) [above left of=f] {(2)};

 				\draw[very thick,blue,postaction=decorate] (a) .. controls +(35:1cm) and +(145:1cm) .. (a); 
 				\draw[very thick,blue,postaction=decorate] (b) .. controls +(35:1cm) and +(145:1cm) .. (b);
 				\draw[very thick,blue,postaction=decorate] (e) to[bend right=60] node[auto] {} (f); 
 				\draw[very thick,blue,postaction=decorate] (f) to[bend right=60] node[auto] {} (e);
			\end{tikzpicture}
\tabularnewline
\multicolumn{2}{c}{ (1) $x^{i^{(1)}_1i^{(1)}_2}y_{i^{(1)}_1i^{(1)}_2}x^{i^{(2)}_1i^{(2)}_2}y_{i^{(2)}_1i^{(2)}_2}$ \hspace*{2cm}
	  (2) $x^{i^{(1)}_1i^{(1)}_2}y_{i^{(2)}_1i^{(1)}_2}x^{i^{(2)}_1i^{(2)}_2}y_{i^{(1)}_1i^{(2)}_2}$}
\tabularnewline\midrule
   $\beta(G) = 2$, $m=2$ & $r=3$, $m=2$
\tabularnewline
      \begin{tikzpicture}[scale=0.85,node distance = 1.6 cm,decoration={markings, mark=at position .5 with {\arrow[black]{stealth};}}]
 				\node (a) [fill,circle,inner sep=1.7pt] {};
 				\node (b) [left of=a] {};
 				\node (c) [right of=a] {};
 	      \draw[very thick,blue,postaction=decorate,shorten <=-3pt] (b) .. controls +(90:1cm) and +(135:1cm) .. (a);
 				\draw[very thick,blue,shorten <=-3.5pt]    (b) .. controls +(-90:1cm) and +(-135:1cm) .. (a);
 				\draw[very thick,postaction=decorate,red,shorten <=-3pt]    (c) .. controls +(90:1cm) and +(45:1cm) .. (a);
 				\draw[very thick,red,shorten <=-3.5pt] (c) .. controls +(-90:1cm) and +(-45:1cm) .. (a);
			\end{tikzpicture}
      &
      \begin{tikzpicture}[scale=0.6,node distance = 1 cm,decoration={markings, mark=at position 0.55 with {\arrow[black]{stealth};}}]
 				\node (a) [fill,circle,inner sep=1.4pt] {};
 				\node (b) [below of=a][fill,circle,inner sep=1.4pt] {};
 				\node (1) [above left of=b] {(1)};
 				\node (c) [right of=a] {};
 				\node (d) [right of=b] {};
 				\node (e) [right of=c][fill,circle,inner sep=1.4pt] {};
 				\node (f) [right of=d][fill,circle,inner sep=1.4pt] {};
 				\node (2) [above left of=f] {(2)};
 				\node (g) [right of=e] {};
 				\node (h) [right of=f] {};
 				\node (i) [right of=g][fill,circle,inner sep=1.4pt] {};
 				\node (j) [right of=h][fill,circle,inner sep=1.4pt] {};
 				\node (3) [above left of=j] {(3)};
 				\node (k) [right of=i] {};
 				\node (l) [right of=j] {};
 				\node (m) [right of=k][fill,circle,inner sep=1.4pt] {};
 				\node (n) [right of=l][fill,circle,inner sep=1.4pt] {};
 				\node (4) [above left of=n] {(4)};

 				\draw[very thick,blue,postaction=decorate] (a) .. controls +(-145:1cm) and +(-35:1cm) .. (a); 
 				\draw[very thick,blue,postaction=decorate] (b) .. controls +(-145:1cm) and +(-35:1cm) .. (b);
 				\draw[very thick,red,postaction=decorate] (a) .. controls +(145:1cm) and +(35:1cm) .. (a);
 				\draw[very thick,red,postaction=decorate] (b) .. controls +(145:1cm) and +(35:1cm) .. (b);

 				\draw[very thick,blue,postaction=decorate] (f) to[bend left=60] node[auto] {} (e); 
 				\draw[very thick,blue,postaction=decorate] (e) to[bend left=60] node[auto] {} (f);
 				\draw[very thick,red,postaction=decorate] (e) .. controls +(35:1cm) and +(145:1cm) .. (e);
 				\draw[very thick,red,postaction=decorate] (f) .. controls +(-145:1cm) and +(-35:1cm) .. (f);

 				\draw[very thick,blue,postaction=decorate] (i) .. controls +(35:1cm) and +(145:1cm) .. (i); 
 				\draw[very thick,blue,postaction=decorate] (j) .. controls +(-145:1cm) and +(-35:1cm) .. (j);
 				\draw[very thick,red,postaction=decorate] (i) to[bend left=60] node[auto] {} (j);
 				\draw[very thick,red,postaction=decorate] (j) to[bend left=60] node[auto] {} (i);

 				\draw[very thick,blue,postaction=decorate] (m) to[bend left=70] node[auto] {} (n); 
 				\draw[very thick,blue,postaction=decorate] (n) to[bend left=70] node[auto] {} (m);
 				\draw[very thick,red,postaction=decorate] (m) to[bend right=40] node[auto] {} (n);
 				\draw[very thick,red,postaction=decorate] (n) to[bend right=40] node[auto] {} (m);
			\end{tikzpicture}
\tabularnewline
\multicolumn{2}{c}{ (1) $x^{i^{(1)}_1i^{(1)}_2i^{(1)}_3}y_{i^{(1)}_1i^{(1)}_2i^{(1)}_3}x^{i^{(2)}_1i^{(2)}_2i^{(2)}_3}y_{i^{(2)}_1i^{(2)}_2i^{(2)}_3}$ \hspace*{1cm}
	  (3) $x^{i^{(1)}_1i^{(1)}_2i^{(1)}_3}y_{i^{(1)}_1i^{(2)}_2i^{(1)}_3}x^{i^{(2)}_1i^{(2)}_2i^{(2)}_3}y_{i^{(2)}_1i^{(1)}_2i^{(2)}_3}$}
 \tabularnewline
\multicolumn{2}{c}{  (2) $x^{i^{(1)}_1i^{(1)}_2i^{(1)}_3}y_{i^{(2)}_1i^{(1)}_2i^{(1)}_3}x^{i^{(2)}_1i^{(2)}_2i^{(2)}_3}y_{i^{(1)}_1i^{(2)}_2i^{(2)}_3}$ \hspace*{1cm}
	  (4) $x^{i^{(1)}_1i^{(1)}_2i^{(1)}_3}y_{i^{(2)}_1i^{(2)}_2i^{(1)}_3}x^{i^{(2)}_1i^{(2)}_2i^{(2)}_3}y_{i^{(1)}_1i^{(1)}_2i^{(2)}_3}$}
\tabularnewline\midrule
   $\beta(G) = 3$, $m=2$ & $r=4$, $m=2$
\tabularnewline
			\begin{tikzpicture}[scale=0.8,node distance = 1.8 cm,decoration={markings, mark=at position 0.4 with {\arrow[black]{stealth};}}]
 				\node (a) [fill,circle,inner sep=1.7pt] {};
 				\node (o1) [left=1.35cm of a] {};
 				\node (b) [above=1cm of o1] {};
 				\node (o2) [right=1.35cm of a] {};
 				\node (c) [above=1cm of o2] {};
 				\node (d) [below of=a] {};
 				\draw[very thick,blue,postaction=decorate,shorten <=-4.5pt] 					 (b) .. controls +(50:1cm) and +(105:1cm) .. (a);
 				\draw[very thick,blue,shorten <=-4.5pt] 														 (b) .. controls +(230:1cm) and +(195:1cm) .. (a);
 				\draw[very thick,red,shorten <=-4.5pt] 																 (c) .. controls +(130:1cm) and +(75:1cm) .. (a);
 				\draw[very thick,red,postaction=decorate,shorten <=-4.5pt] 					 (c) .. controls +(310:1cm) and +(345:1cm) .. (a);
 				\draw[very thick,green!70!black,postaction=decorate,shorten <=-3.5pt] (d) .. controls +(180:1cm) and +(225:1cm) .. (a);
 				\draw[very thick,green!70!black,shorten <=-3pt] 									 (d) .. controls +(0:1cm) and +(315:1cm) .. (a);
			\end{tikzpicture}
      &
      \centering
			\begin{tikzpicture}[scale=0.6,node distance = 1 cm,decoration={markings, mark=at position 0.55 with {\arrow[black]{stealth};}}]
 				\node (a) [fill,circle,inner sep=1.4pt] {};
 				\node (b) [below of=a][fill,circle,inner sep=1.4pt] {};
 				\node (1) [above left of=b] {(1)};
 				\node (c) [right of=a] {};
 				\node (d) [right of=b] {};
 				\node (e) [right of=c][fill,circle,inner sep=1.4pt] {};
 				\node (f) [right of=d][fill,circle,inner sep=1.4pt] {};
 				\node (2) [above left of=f] {(2)};
 				\node (g) [right of=e] {};
 				\node (h) [right of=f] {};
 				\node (i) [right of=g][fill,circle,inner sep=1.4pt] {};
 				\node (j) [right of=h][fill,circle,inner sep=1.4pt] {};
 				\node (3) [above left of=j] {(3)};
 				\node (k) [right of=i] {};
 				\node (l) [right of=j] {};
 				\node (m) [right of=k][fill,circle,inner sep=1.4pt] {};
 				\node (n) [right of=l][fill,circle,inner sep=1.4pt] {};
 				\node (4) [above left of=n] {(4)};
 				\node (z) [right of=n] {};
 				\node (o) [below of=d][fill,circle,inner sep=1.4pt] {};
 				\node (p) [below of=o][fill,circle,inner sep=1.4pt] {};
 				\node (5) [above left of=p] {(5)};
 				\node (q) [below of=h][fill,circle,inner sep=1.4pt] {};
 				\node (r) [below of=q][fill,circle,inner sep=1.4pt] {};
 				\node (6) [above left of=r] {(6)};
 				\node (s) [below of=l][fill,circle,inner sep=1.4pt] {};
 				\node (t) [below of=s][fill,circle,inner sep=1.4pt] {};
 				\node (7) [above left of=t] {(7)};
 				\node (u) [below of=z][fill,circle,inner sep=1.4pt] {};
 				\node (v) [below of=u][fill,circle,inner sep=1.4pt] {};
 				\node (8) [above left of=v] {(8)};
 				
 				\draw[very thick,blue,postaction=decorate] (a) .. controls +(95:1cm) and +(205:1cm) .. (a); 
 				\draw[very thick,blue,postaction=decorate] (b) .. controls +(95:1cm) and +(205:1cm) .. (b);
 				\draw[very thick,red,postaction=decorate] (a) .. controls +(335:1cm) and +(85:1cm) .. (a);
 				\draw[very thick,red,postaction=decorate] (b) .. controls +(335:1cm) and +(85:1cm) .. (b);
 				\draw[very thick,green!70!black,postaction=decorate] (a) .. controls +(215:1cm) and +(325:1cm) .. (a);
 				\draw[very thick,green!70!black,postaction=decorate] (b) .. controls +(215:1cm) and +(325:1cm) .. (b);

 				\draw[very thick,blue,postaction=decorate] (e) to[bend right=60] node[auto] {} (f); 
 				\draw[very thick,blue,postaction=decorate] (f) to[bend right=60] node[auto] {} (e);
 				\draw[very thick,red,postaction=decorate] (e) .. controls +(335:1cm) and +(85:1cm) .. (e);
 				\draw[very thick,red,postaction=decorate] (f) .. controls +(25:1cm) and +(-85:1cm) .. (f);
 				\draw[very thick,green!70!black,postaction=decorate] (e) .. controls +(95:1cm) and +(205:1cm) .. (e);
 				\draw[very thick,green!70!black,postaction=decorate] (f) .. controls +(265:1cm) and +(155:1cm) .. (f);

 				\draw[very thick,blue,postaction=decorate] (i) .. controls +(335:1cm) and +(85:1cm) .. (i); 
 				\draw[very thick,blue,postaction=decorate] (j) .. controls +(25:1cm) and +(-85:1cm) .. (j);
 				\draw[very thick,red,postaction=decorate] (i) to[bend right=60] node[auto] {} (j);
 				\draw[very thick,red,postaction=decorate] (j) to[bend right=60] node[auto] {} (i);
 				\draw[very thick,green!70!black,postaction=decorate] (i) .. controls +(95:1cm) and +(205:1cm) .. (i);
 				\draw[very thick,green!70!black,postaction=decorate] (j) .. controls +(265:1cm) and +(155:1cm) .. (j);

 				\draw[very thick,blue,postaction=decorate] (m) .. controls +(335:1cm) and +(85:1cm) .. (m); 
 				\draw[very thick,blue,postaction=decorate] (n) .. controls +(25:1cm) and +(-85:1cm) .. (n);
 				\draw[very thick,red,postaction=decorate] (m) .. controls +(95:1cm) and +(205:1cm) .. (m);
 				\draw[very thick,red,postaction=decorate] (n) .. controls +(265:1cm) and +(155:1cm) .. (n);
 				\draw[very thick,green!70!black,postaction=decorate] (m) to[bend right=60] node[auto] {} (n);
 				\draw[very thick,green!70!black,postaction=decorate] (n) to[bend right=60] node[auto] {} (m);
 				
 				\draw[very thick,blue,postaction=decorate] (o) to[bend left=40] node[auto] {} (p); 
 				\draw[very thick,blue,postaction=decorate] (p) to[bend left=40] node[auto] {} (o);
 				\draw[very thick,red,postaction=decorate] (p) to[bend right=70] node[auto] {} (o);
 				\draw[very thick,red,postaction=decorate] (o) to[bend right=70] node[auto] {} (p);
 				\draw[very thick,green!70!black,postaction=decorate] (o) .. controls +(35:1cm) and +(145:1cm) .. (o);
 				\draw[very thick,green!70!black,postaction=decorate] (p) .. controls +(-145:1cm) and +(-35:1cm) .. (p);

 				\draw[very thick,blue,postaction=decorate] (q) .. controls +(35:1cm) and +(145:1cm) .. (q); 
 				\draw[very thick,blue,postaction=decorate] (r) .. controls +(-145:1cm) and +(-35:1cm) .. (r);
 				\draw[very thick,red,postaction=decorate] (r) to[bend left=40] node[auto] {} (q);
 				\draw[very thick,red,postaction=decorate] (q) to[bend left=40] node[auto] {} (r);
 				\draw[very thick,green!70!black,postaction=decorate] (q) to[bend right=70] node[auto] {} (r);
 				\draw[very thick,green!70!black,postaction=decorate] (r) to[bend right=70] node[auto] {} (q);

 				\draw[very thick,blue,postaction=decorate] (s) to[bend right=70] node[auto] {} (t); 
 				\draw[very thick,blue,postaction=decorate] (t) to[bend right=70] node[auto] {} (s);
 				\draw[very thick,red,postaction=decorate] (s) .. controls +(35:1cm) and +(145:1cm) .. (s);
 				\draw[very thick,red,postaction=decorate] (t) .. controls +(-145:1cm) and +(-35:1cm) .. (t);
 				\draw[very thick,green!70!black,postaction=decorate] (t) to[bend left=40] node[auto] {} (s);
 				\draw[very thick,green!70!black,,postaction=decorate] (s) to[bend left=40] node[auto] {} (t);

 				\draw[very thick,blue,postaction=decorate] (u) to[bend left=90] node[auto] {} (v); 
 				\draw[very thick,blue,postaction=decorate] (v) to[bend left=90] node[auto] {} (u);
 				\draw[very thick,red,postaction=decorate] (v) to[bend right=50] node[auto] {} (u);
 				\draw[very thick,red,postaction=decorate] (u) to[bend right=50] node[auto] {} (v);
 				\draw[very thick,green!70!black,postaction=decorate] (u) to[bend left=25] node[auto] {} (v);
 				\draw[very thick,green!70!black,postaction=decorate] (v) to[bend left=25] node[auto] {} (u);
		\end{tikzpicture}
\tabularnewline
 \tabularnewline
\multicolumn{2}{c}{
(1) $x^{i^{(1)}_1i^{(1)}_2i^{(1)}_3i^{(1)}_4}y_{i^{(1)}_1i^{(1)}_2i^{(1)}_3i^{(1)}_4}x^{i^{(2)}_1i^{(2)}_2i^{(2)}_3i^{(2)}_4}y_{i^{(2)}_1i^{(2)}_2i^{(2)}_3i^{(2)}_4}$ }
	
 \tabularnewline
\multicolumn{2}{c}{ (2) $x^{i^{(1)}_1i^{(1)}_2i^{(1)}_3i^{(1)}_4}y_{i^{(2)}_1i^{(1)}_2i^{(1)}_3i^{(1)}_4}x^{i^{(2)}_1i^{(2)}_2i^{(2)}_3i^{(2)}_4}y_{i^{(1)}_1i^{(2)}_2i^{(2)}_3i^{(2)}_4}$ }

 \tabularnewline
\multicolumn{2}{c}{ (3) $x^{i^{(1)}_1i^{(1)}_2i^{(1)}_3i^{(1)}_4}y_{i^{(1)}_1i^{(2)}_2i^{(1)}_3i^{(1)}_4}x^{i^{(2)}_1i^{(2)}_2i^{(2)}_3i^{(2)}_4}y_{i^{(2)}_1i^{(1)}_2i^{(2)}_3i^{(2)}_4}$ }

 \tabularnewline
\multicolumn{2}{c}{ (4) $x^{i^{(1)}_1i^{(1)}_2i^{(1)}_3i^{(1)}_4}y_{i^{(1)}_1i^{(1)}_2i^{(2)}_3i^{(1)}_4}x^{i^{(2)}_1i^{(2)}_2i^{(2)}_3i^{(2)}_4}y_{i^{(2)}_1i^{(2)}_2i^{(1)}_3i^{(2)}_4}$ }

 \tabularnewline
\multicolumn{2}{c}{ (5) $x^{i^{(1)}_1i^{(1)}_2i^{(1)}_3i^{(1)}_4}y_{i^{(2)}_1i^{(2)}_2i^{(1)}_3i^{(1)}_4}x^{i^{(2)}_1i^{(2)}_2i^{(2)}_3i^{(2)}_4}y_{i^{(1)}_1i^{(1)}_2i^{(2)}_3i^{(2)}_4}$ }

 \tabularnewline
\multicolumn{2}{c}{ (6) $x^{i^{(1)}_1i^{(1)}_2i^{(1)}_3i^{(1)}_4}y_{i^{(1)}_1i^{(2)}_2i^{(2)}_3i^{(1)}_4}x^{i^{(2)}_1i^{(2)}_2i^{(2)}_3i^{(2)}_4}y_{i^{(2)}_1i^{(1)}_2i^{(1)}_3i^{(2)}_4}$ }

 \tabularnewline
\multicolumn{2}{c}{(7) $x^{i^{(1)}_1i^{(1)}_2i^{(1)}_3i^{(1)}_4}y_{i^{(2)}_1i^{(1)}_2i^{(2)}_3i^{(1)}_4}x^{i^{(2)}_1i^{(2)}_2i^{(2)}_3i^{(2)}_4}y_{i^{(1)}_1i^{(2)}_2i^{(1)}_3i^{(2)}_4}$ }

 \tabularnewline
\multicolumn{2}{c}{ (8) $x^{i^{(1)}_1i^{(1)}_2i^{(1)}_3i^{(1)}_4}y_{i^{(2)}_1i^{(2)}_2i^{(2)}_3i^{(1)}_4}x^{i^{(2)}_1i^{(2)}_2i^{(2)}_3i^{(2)}_4}y_{i^{(1)}_1i^{(1)}_2i^{(1)}_3i^{(2)}_4}$ }

\tabularnewline\bottomrule
    \end{tabular}
\end{table}
}

\bigskip
Finally, the fact that all invariants fall into this correspondence follows from
\begin{thm}\label{thm_basis_intro} For all $n_1, \cdots, n_r$ and $d = 2m$, we have
\[
    \mbox{Span} \{ f_{[\sigma_1, \cdots, \sigma_{r-1}]} : (\sigma_1, \cdots, \sigma_{r-1}) \in S_m^{r-1} \} = \mcPr^d(V(\bfn))^{K(\bfn)}.
\]
\end{thm}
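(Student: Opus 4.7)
The plan is to reduce $K(\bfn)$-invariants on $\mcPr(\V)$ to $\G$-invariants in a tensor space, apply Schur--Weyl duality to obtain a clean spanning set indexed by $S_m^r$, and then symmetrize to recover the $f_{[\sigma_1, \ldots, \sigma_{r-1}]}$. First, $\mcPr(\V) \cong \mcP(\V \oplus \overline{\V})$ as graded $K(\bfn)$-representations, and since $\overline{\V} \cong \V^*$ as $K(\bfn)$-modules, complexification of $K(\bfn)$ to $\G$ gives $\mcPr^d(\V)^{K(\bfn)} = \mcP^d(\V \oplus \V^*)^{\G}$. For $d = 2m$, the diagonal center $(\bbC^\times)^r \subset \G$ forces the only surviving bidegree to be $(m,m)$, so
\[
\mcPr^{2m}(\V)^{K(\bfn)} \cong \bigl[ S^m(\V^*) \ot S^m(\V) \bigr]^{\G}.
\]

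Next, use the $\G$-equivariant symmetrization surjection $\text{Sym}: (\V^*)^{\ot m} \ot \V^{\ot m} \twoheadrightarrow S^m(\V^*) \ot S^m(\V)$, which restricts to a surjection on invariants. It therefore suffices to produce a spanning set of $[(\V^*)^{\ot m} \ot \V^{\ot m}]^{\G}$ whose image under $\text{Sym}$ is exactly the set of $f_{[\sigma_1, \ldots, \sigma_{r-1}]}$. Reordering tensor factors,
\[
\bigl[ (\V^*)^{\ot m} \ot \V^{\ot m} \bigr]^{\G} \cong \bigotimes_{k=1}^r \bigl[ ((\bbC^{n_k})^*)^{\ot m} \ot (\bbC^{n_k})^{\ot m} \bigr]^{\GL(n_k)},
\]
and by Schur--Weyl duality (Theorem \ref{SWD}), each $\GL(n_k)$-factor is the image of the group algebra $\bbC[S_m]$ in $\End((\bbC^{n_k})^{\ot m})$, spanned by the permutation operators $T_\pi$ for $\pi \in S_m$. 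Consequently the $\G$-invariants in $(\V^*)^{\ot m} \ot \V^{\ot m}$ are spanned by the tensors $T_{\pi_1} \ot \cdots \ot T_{\pi_r}$, $(\pi_1, \ldots, \pi_r) \in S_m^r$.

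Finally, expanding $T_{\pi_1} \ot \cdots \ot T_{\pi_r}$ in the bases $\{e_i\}$, $\{e^i\}$ and applying $\text{Sym}$ produces a polynomial in the $x$ and $y$ coordinates whose form matches the defining sum of $f$. The symmetrization identifies any two spanning tensors related by the $S_m \times S_m$ action $(\alpha, \beta) \cdot (\pi_1, \ldots, \pi_r) = (\alpha \pi_1 \beta^{-1}, \ldots, \alpha \pi_r \beta^{-1})$, whose orbits are precisely the double cosets $\Delta \backslash S_m^r / \Delta$. In each double coset one chooses the canonical representative $(\sigma_1, \ldots, \sigma_{r-1}, e)$ with $\sigma_k = \pi_k \pi_r^{-1}$, via the bijection $\theta$ of the introduction; the symmetrized image of the tensor associated to this representative is exactly $f_{[\sigma_1, \ldots, \sigma_{r-1}]}$, and the claimed spanning statement follows.

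The main obstacle is the coordinate bookkeeping in the last step: verifying that the explicit symmetrization of $T_{\pi_1} \ot \cdots \ot T_{\pi_r}$ really produces $f_{[\sigma_1, \ldots, \sigma_{r-1}]}$ with the stated index conventions (with identity in the $r$-th slot after right-multiplication by $(\pi_r, \ldots, \pi_r)$), and confirming that the $S_m \times S_m$ action on the tensor spanning set is induced by symmetrization in exactly the way described. The remaining ingredients --- complexification, bidegree reduction via the center, Schur--Weyl duality, and the tensor factor reordering --- are standard.
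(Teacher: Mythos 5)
Your proposal is correct and follows essentially the same route as the paper (Sections 2--3): complexify $K(\bfn)$ to $\G$, use the center to isolate the bidegree $(m,m)$ component, apply Schur--Weyl duality to span the tensor invariants by $S_m^r$, and then symmetrize and pass to double cosets $\Delta \backslash S_m^r / \Delta \cong S_m^{r-1}/S_m$ via $\theta$. The only difference is cosmetic --- you work directly in $(\V^*)^{\ot m} \ot \V^{\ot m}$ rather than decomposing all of $\bigotimes^{2m}(\V \oplus \V^*)$ over the $m$-subsets $M$ and then showing the projection is independent of $M$ (the paper's Propositions \ref{prop_spanning_invariants} and \ref{prop_no_M_needed}) --- and the final coordinate verification you defer is exactly the explicit computation the paper carries out in display \eqref{eqn_bigproduct} and its sequel.
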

\begin{proof}  Follows from Theorem \ref{thm_basis_poly} proved in Section \ref{sec_symtensor}.
\end{proof}

\newpage
\noindent{\bf Acknowledgments}  We would like to thank Jan Draisma for valuable comments in an initial draft of this manuscript, including the observation that Theorem \ref{thm_basis_intro} follows from standard arguments in classical invariant theory.
We would also like to thank Nolan Wallach for many helpful comments.

\newpage
\section{Invariants in the tensor algebra}\label{sec_tensor}

The group $\GL(n)$ has the structure of a reductive linear algebraic group over the field $\bbC$.  This article concerns the regular representations of such groups, which are closed under the operations of direct sum, tensor product, and duality.  That is, in general, if $V_1$ and $V_2$ are regular representations of a linear algebraic group $G$ then $V_1 \oplus V_2$ is a regular representation of $G$ defined by $g \cdot (v_1,v_2) = (g v_1, g v_2)$ and $V_1 \ot V_2$ is a regular representation of $G$ defined by $g \cdot (v_1 \ot v_2) = (g v_1) \ot (g v_2)$ (for $g \in G$ and $v_i \in V_i$), and extending by linearity.  Also if $V_1$ (resp. $V_2$) is a regular representation of an algebraic group $G_1$ (resp. $G_2$) then $V_1 \ot V_2$ also denotes the representation of $G_1 \times G_2$ defined by $(g_1, g_2) \cdot (v_1 \ot v_2) = (g_1 v_1) \ot (g_2 v_2)$.  In the case where $G_1 = G_2 = G$ then both $G$ and $G \times G$ act on $V_1 \ot V_2$.  The latter will be referred to as the ``outer'' action of $G \times G$ whose restriction to the diagonal subgroup, $\{(g,g): g\in G\}$, is equivalent to the former action, referred to as the ``inner'' action of $G$.  Throughout we will be careful to distinguish between these two actions, when there is ambiguity.

Let ${\bf n} = (n_1, \cdots, n_r)$ denote an $r$-tuple of positive integers. Let $\V = \bbC^{n_1} \ot \bbC^{n_2} \ot \cdots \ot \bbC^{n_r}$, which is an irreducible representation of the group $\G = \GL(n_1) \times \cdots \times \GL(n_r)$, under the outer action defined by
\[
    (g_1, \cdots, g_r) \cdot (v_1 \ot \cdots \ot v_r) = (g_1 v_1) \ot \cdots \ot (g_r v_r),
\] where for all $i$, $g_i \in \GL(n_i)$, $v_i \in \bbC^{n_i}$, and extending by linearity.

For a non-negative integer $d$, let
    \[ \mcT{d} = \bot^d\left[ \V \oplus \V^* \right].\]
The group $\G$ acts on $\mcT{d}$ by the inner action on the $d$-fold tensors on $\V \oplus \dual{\V}$.
The goal of this section is to find a spanning set for the $\G$-invariants, $\mcT{d}^\G$.  We shall see shortly that, upon examination of the action of the center of $\G$, $\mcT{d}^{\G} = \{0\}$ for $d$ odd.  Thus, we assume $d = 2m$ for a non-negative integer $m$.

If $V$ is a vector space, we introduce the notation $V_0 = V$ and $V_1 = V^*$.
We let $\bfb = (b_1, b_2, \cdots, b_d)$ denote a $d$-tuple of zeros and ones.  Set $\T(\bfn, \bfb)=\bigotimes_{i=1}^d \V_{b_i}$.  We have
\[
    \mcT{d} = \bigoplus_\bfb \T(\bfn, \bfb).
\] where the sum is over all $d$-long $\{0,1\}$-sequences, $\bfb$.   This equality follows from the bi-linearity of the tensor product (ie:  $(A \oplus B)\ot(C \oplus D) = A \ot C \oplus A \ot D \oplus B \ot C \oplus B \ot D$).  Thus, the problem of finding a basis for $\mcT{d}^\G$ reduces to finding a basis for $\T(\bfn, \bfb)^\G$ for each $\bfb$.  The tensor factors in $\T(\bfn, \bfb)$ may be re-ordered by defining
\[
    T_n(\bfb) = \bigotimes_{i=1}^d \left(\bbC^{n}\right)_{b_i},
\] which is a representation of $\GL(n)$ wrt the inner action.  Then, permuting the tensor factors so that those involving $\bbC^{n_i}$ are grouped together defines an isomorphism,
\[
    \Phi^\bfb_\bfn:\T(\bfn, \bfb) \rightarrow T_{n_1}(\bfb) \ot \cdots \ot T_{n_r}(\bfb)
\] of $\G$-representations.  We will now
obtain a basis for the $\G$-invariants of $\bigotimes_{i=1}^r T_{n_i}(\bfb)$, for each $\bfb$, under the assumption that $n_i$ are large with respect to $d$.

The center, denoted $\mcZ$, of $\G$ is $\{ (x_1 I_{n_1}, \cdots, x_r I_{n_r}) | x_1, \cdots, x_r \in \bbC^\times \}$ where $I_k$ is the $k \times k$ identity matrix ($k \in \bbZ^+$). Let $|\bfb|_1 = \sum_i b_i$ denote the number of 1's in $\bfb$, while $|\bfb|_0 = d - |\bfb|_1$ is the number of 0's in $\bfb$.
\begin{lemma}\label{lemma_equal_01}
For any $\bfn$ and $\bfb$, we have $\T(\bfn, \bfb)^\G = \{ 0 \}$ if $|\bfb|_1 \neq |\bfb|_0$.
\end{lemma}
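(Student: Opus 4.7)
The plan is to exploit the action of the center $\mcZ \subset \G$ on $\T(\bfn,\bfb)$. Any $\G$-invariant vector must in particular be fixed by every element of $\mcZ$, so if I can exhibit a central element that acts on all of $\T(\bfn,\bfb)$ by a scalar $\neq 1$ whenever $|\bfb|_1 \neq |\bfb|_0$, the lemma follows immediately.

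To carry this out, I would first compute how the central element $z = (x_1 I_{n_1}, \ldots, x_r I_{n_r}) \in \mcZ$ acts on the basic building blocks. On $\V = \bbC^{n_1} \otimes \cdots \otimes \bbC^{n_r}$, the scalars pull out of each tensor factor, so $z$ acts by the scalar $x_1 x_2 \cdots x_r$. Dually, on $\V^*$ it acts by $(x_1 x_2 \cdots x_r)^{-1}$. Therefore on $\T(\bfn,\bfb) = \bigotimes_{i=1}^d \V_{b_i}$, each factor with $b_i = 0$ contributes a factor of $x_1 \cdots x_r$ and each factor with $b_i = 1$ contributes $(x_1 \cdots x_r)^{-1}$, so the total scalar by which $z$ acts is
\[
\chi_\bfb(z) = (x_1 x_2 \cdots x_r)^{|\bfb|_0 - |\bfb|_1}.
\]

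Now suppose $|\bfb|_0 \neq |\bfb|_1$, so the exponent $N := |\bfb|_0 - |\bfb|_1$ is a nonzero integer. Choosing, say, $x_1 = 2$ and $x_2 = \cdots = x_r = 1$ gives a central element acting by $2^N \neq 1$. Any $v \in \T(\bfn,\bfb)^\G$ must satisfy $z \cdot v = v$, i.e. $2^N v = v$, which forces $v = 0$. Thus $\T(\bfn,\bfb)^\G = \{0\}$, as claimed.

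The argument is essentially a weight-counting observation, so there is no real obstacle; the only thing to verify carefully is that $\mcZ$ really does act by the claimed character on $\T(\bfn,\bfb)$, which is a direct consequence of the multiplicativity of the $\G$-action over tensor factors and the definition of the dual representation $\V^* = \V_1$. No appeal to representation-theoretic machinery beyond the behaviour of scalars under duality and tensor product is needed.
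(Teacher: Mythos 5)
Your proof is correct and follows essentially the same route as the paper: both arguments restrict the $\G$-invariance condition to the center $\mcZ$ and observe that a central element acts on $\T(\bfn,\bfb)$ by the character with exponent $|\bfb|_0 - |\bfb|_1$, which is nontrivial unless $|\bfb|_0 = |\bfb|_1$. Your explicit choice $x_1 = 2$, $x_2 = \cdots = x_r = 1$ just makes concrete what the paper leaves implicit.
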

\begin{proof}
The center of $\GL(n_i)$, $\{x_i I_{n_i}| x_i \in \bbC^\times \}$, acts on $T_{n_i}(\bfb)$ by the scalar $x_i^{|\bfb|_0 - |\bfb|_1}$.
Therefore $\mcZ$ acts trivially on $\T(\bfn, \bfb)$ exactly when $|\bfb|_1 = |\bfb|_0$.  \end{proof}
As a consequence of Lemma \ref{lemma_equal_01}, we will assume from this point on that $d = 2m$ is even, and that any $\bfb$ is to have $m$ 1's and $m$ 0's.  The converse of Lemma \ref{lemma_equal_01} is also true which we address next.

Suppose that $\bfb = 0^m1^m$ where $1^m$ (resp. $0^m$) is a sequence of $m$ 1's (resp. 0's). Then, for all $n$ we have $T_{n}(\bfb) = \left( \bot^m \bbC^{n} \right) \ot \dual{ \bigotimes^m \bbC^{n} }$.  For each permutation $\sigma \in S_m$, define:
\[
    t_n(\sigma) = \sum_{\twoline{(j_1, \cdots, j_m)}{1 \leq j_k \leq n, \forall k}}
    (e_{j_1} \ot \cdots \ot e_{j_m}) \ot (e^{j_{\sigma(1)}} \ot \cdots \ot e^{j_{\sigma(m)}})
\]  (Recall that in the above sum, $e_k$, and $e^k$, are the ordered bases of $\bbC^n$ and $\dual{\bbC^n}$ respectively defined previously.)

\begin{thm*}[Schur-Weyl Duality]\label{SWD} Let $n$ and $m$ be positive integers.  The tensor product, $\bot^m \bbC^n$, is a representation of $\GL(n)$ under the inner action, and also a representation of $S_m$ as defined by permutation of the tensor factors.  Each of these actions generates the full commuting associative algebra action.  Thus, we obtain a surjective algebra homomorphism
\[
    \bbC[S_m] \rightarrow \mbox{End}_{\GL(n)} \left( \bot^m \bbC^n \right),
\] which is an isomorphism if and only if $n \geq m$.
\end{thm*}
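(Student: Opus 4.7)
The plan is to prove the theorem through the classical invariant theory of $\GL(n)$. The central tool is the canonical $\GL(n)$-equivariant isomorphism
\[
\Psi \colon \End_\bbC\!\left(\bot^m \bbC^n\right) \xrightarrow{\sim} \left(\bot^m \bbC^n\right) \ot \left(\bot^m (\bbC^n)^*\right),
\]
under which the commutant $\End_{\GL(n)}\!\left(\bot^m \bbC^n\right)$ corresponds to the space of $\GL(n)$-invariants in the right-hand side. The First Fundamental Theorem of classical invariant theory for $\GL(n)$ asserts that these invariants are spanned by the $m!$ complete contractions—one for each bijective matching between the $m$ copies of $\bbC^n$ and the $m$ copies of $(\bbC^n)^*$. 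In the notation of the theorem, these are exactly the tensors $\{t_n(\sigma) : \sigma \in S_m\}$.

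The next step is a direct computation showing that $\Psi^{-1}(t_n(\sigma))$ is the permutation operator on $\bot^m \bbC^n$ sending $e_{i_1} \ot \cdots \ot e_{i_m}$ to $e_{i_{\sigma^{-1}(1)}} \ot \cdots \ot e_{i_{\sigma^{-1}(m)}}$ (up to a choice of convention). This identifies the image of $\bbC[S_m]$ in $\End\!\left(\bot^m \bbC^n\right)$ with $\End_{\GL(n)}\!\left(\bot^m \bbC^n\right)$, establishing the surjective homomorphism in the statement. The companion claim—that the algebra generated by $\{g^{\ot m} : g \in \GL(n)\}$ equals $\End_{S_m}\!\left(\bot^m \bbC^n\right)$—then follows from the double centralizer theorem, using that this algebra is semisimple because $\bot^m \bbC^n$ decomposes completely as a $\GL(n)$-module.

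For the injectivity criterion, the map $\bbC[S_m] \to \End_{\GL(n)}\!\left(\bot^m \bbC^n\right)$ is injective if and only if the tensors $\{t_n(\sigma)\}_{\sigma \in S_m}$ are linearly independent. When $n \geq m$, one can extract the coefficient of $\sigma$ from a linear combination $\sum_\sigma c_\sigma\, t_n(\sigma)$ by reading off the coefficient of $(e_1 \ot \cdots \ot e_m) \ot (e^{\sigma(1)} \ot \cdots \ot e^{\sigma(m)})$, which is $c_\sigma$ alone—so linear independence holds and the map is injective. When $n < m$, the antisymmetrizer $a_m = \sum_{\sigma \in S_m} \mathrm{sgn}(\sigma)\, \sigma \in \bbC[S_m]$ acts on $\bot^m \bbC^n$ with image contained in $\bigwedge^m \bbC^n = 0$, producing a nonzero element of the kernel.

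The main obstacle is the invocation of the First Fundamental Theorem itself; it is the one ingredient that is not formal. A self-contained argument proceeds via Weyl's polarization/restitution: one reduces to classifying multilinear $\GL(n)$-invariant forms on $(\bbC^n)^m \times ((\bbC^n)^*)^m$, and shows that each such form must be a linear combination of products of the canonical pairing $\bbC^n \ot (\bbC^n)^* \to \bbC$, with each $\bbC^n$-slot contracted against exactly one $(\bbC^n)^*$-slot. The matchings of slots being precisely $S_m$ recovers the $m!$ contractions used above.
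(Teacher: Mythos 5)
The paper does not actually prove this theorem---its ``proof'' is a pointer to Goodman--Wallach, where the result is obtained from the double commutant theorem: the span of $\{g^{\ot m} : g \in \GL(n)\}$ inside $\End(\bot^m\bbC^n) = (\End \bbC^n)^{\ot m}$ is the symmetric subspace $\mcS^m(\End\bbC^n)$ (by polarization and Zariski density of $\GL(n)$ in $\End\bbC^n$), and the commutant of that subspace is then computed directly to be the span of the permutation operators. Your route is genuinely different: you transport the commutant to the $\GL(n)$-invariants of $(\bot^m\bbC^n)\ot\bigl(\bot^m(\bbC^n)^*\bigr)$ and invoke the first fundamental theorem. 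Everything downstream of that invocation is correct: the identification of $t_n(\sigma)$ with a permutation operator, the double centralizer argument for the companion claim, the coefficient-extraction argument for linear independence when $n \geq m$, and the observation that the antisymmetrizer $a_m$ acts by zero when $n < m$ (its image being the alternating tensors, of dimension $\binom{n}{m}=0$) are all sound and are the standard arguments.

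The genuine weak point is your final paragraph. Under your isomorphism $\Psi$, the claim that the $t_n(\sigma)$ span the invariants of $(\bot^m\bbC^n)\ot\bigl(\bot^m(\bbC^n)^*\bigr)$ \emph{is} the surjectivity assertion of Schur--Weyl duality, not a separate and easier fact. Polarization/restitution only reduces \emph{polynomial} invariants to \emph{multilinear} ones, and here you are already in the multilinear setting, so the ``self-contained argument'' you sketch never engages the actual content: why every multilinear invariant of $m$ vectors and $m$ covectors is a combination of complete contractions. Moreover, the usual textbook proof of that multilinear FFT (including the one in Goodman--Wallach) is \emph{deduced from} Schur--Weyl duality, so quoting it here risks circularity. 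To close the gap, either supply an independent proof of the multilinear FFT (Weyl's Capelli-identity argument, or the geometric argument via matrix multiplication as in De Concini--Procesi), or replace this step with the double-commutant computation sketched above, which establishes surjectivity directly and from which the FFT then follows as a corollary.
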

\begin{proof} See \cite{GW} Section 4.2.4 and Chapter 9.  \end{proof}

\begin{prop}\label{prop_from_SchurWeyl} For all $n$, if $n \geq m$ then
$\{t_n(\sigma) | \sigma \in S_m \}$ is a basis for $T_{n}(0^m1^m)^{\GL(n)}$,
otherwise, outside of these inequalities, the above are a spanning set.
\end{prop}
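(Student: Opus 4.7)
The plan is to reduce the proposition to Schur-Weyl duality via the natural $\GL(n)$-equivariant identification
\[
T_n(0^m 1^m) = \left(\bot^m \bbC^n\right) \ot \dual{\bot^m \bbC^n} \;\cong\; \End\!\left(\bot^m \bbC^n\right),
\]
where $\GL(n)$ acts on the endomorphism algebra by conjugation through its action on $\bot^m \bbC^n$. Taking $\GL(n)$-invariants on both sides then yields the canonical isomorphism
\[
T_n(0^m 1^m)^{\GL(n)} \;\cong\; \End_{\GL(n)}\!\left(\bot^m \bbC^n\right).
\]

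My next step is to trace the image of $t_n(\sigma)$ under this isomorphism. Using the standard rule $v \ot \la \mapsto (w \mapsto \la(w)\, v)$ and collapsing the resulting Kronecker deltas in the defining sum for $t_n(\sigma)$, a direct calculation shows that $t_n(\sigma)$ is sent to the operator that permutes the tensor factors of $\bot^m \bbC^n$ according to $\sigma^{-1}$. In other words, the family $\{t_n(\sigma) : \sigma \in S_m\}$ is identified, up to this canonical isomorphism, with the image of $\bbC[S_m]$ under the natural algebra homomorphism $\bbC[S_m] \to \End(\bot^m \bbC^n)$ of Theorem \ref{SWD}.

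Having made this identification, the proposition follows immediately from Schur-Weyl duality: the map $\bbC[S_m] \to \End_{\GL(n)}\!\left(\bot^m \bbC^n\right)$ is surjective for every $n$, so the $t_n(\sigma)$ span $T_n(0^m 1^m)^{\GL(n)}$; and when $n \geq m$ the map is an isomorphism, so the $t_n(\sigma)$ are also linearly independent and hence form a basis. The only genuine obstacle is the bookkeeping in the middle step: one must carefully track the indices to verify that the double sum defining $t_n(\sigma)$ really does produce the permutation operator (and to determine whether $\sigma$ or $\sigma^{-1}$ appears), but this is a short calculation with dual bases and the convention chosen does not affect the indexing set of the spanning family.
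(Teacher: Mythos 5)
Your proof is correct and follows essentially the same route as the paper: identify $T_n(0^m1^m)^{\GL(n)}$ with $\End_{\GL(n)}\left(\bot^m \bbC^n\right)$ and invoke Schur--Weyl duality, using surjectivity of $\bbC[S_m] \to \End_{\GL(n)}\left(\bot^m\bbC^n\right)$ for spanning and injectivity when $n \geq m$ for linear independence. The paper's proof is just a terser version of yours, omitting the explicit verification that $t_n(\sigma)$ corresponds to the permutation operator.
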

\begin{proof}
Observe that $T_n(\bfb)^{\GL(n)} = \left[ \left( \bot^m \bbC^{n} \right) \ot \dual{ \bigotimes^m \bbC^{n} } \right]^{\GL(n)} \cong \End_{\GL(n)}( \bigotimes^m \bbC^{n} )$.  The result follows from Schur-Weyl duality.
\end{proof}
For an $r$-tuple of permutations $\bfs = (\sigma_1, \cdots, \sigma_r) \in S_m^r$ define: $t(\bfs) = t_{n_1}(\sigma_1) \ot \cdots \ot t_{n_r}(\sigma_r)$.
We now obtain
\begin{cor}\label{cor_01b_basis_bfn}
Given $m$ and $\bf n$, if for all $i$ with $1 \leq i \leq r$, we have $n_i \geq m$ then
\[
    \left\{  t(\bfs) | \bfs \in S_m^r \right\}
\] is a vector space basis for the $\G$-invariants in $T_{n_1}(\bfb) \ot \cdots \ot T_{n_r}(\bfb)$ in the case when $\bfb = 0^m1^m$.  Otherwise, outside of these inequalities, the above are a spanning set.

\end{cor}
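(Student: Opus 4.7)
The plan is to reduce the problem to a tensor-factor-by-tensor-factor statement, since the group $\G=\GL(n_1)\times\cdots\times\GL(n_r)$ is a direct product and the $i$-th factor $\GL(n_i)$ acts nontrivially only on $T_{n_i}(\bfb)$. The key tool is the general principle that, for regular representations of complex reductive groups $G_1,\dots,G_r$,
\[
  (A_1\otimes\cdots\otimes A_r)^{G_1\times\cdots\times G_r} \;=\; A_1^{G_1}\otimes\cdots\otimes A_r^{G_r}.
\]
This is the only nontrivial ingredient apart from Proposition \ref{prop_from_SchurWeyl}, and I would isolate it as a lemma.

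First, I would establish the tensor-product-of-invariants identity. By complete reducibility, each $A_i$ decomposes under $G_i$ into a sum of isotypic components, the trivial one being $A_i^{G_i}$. Expanding $A_1\otimes\cdots\otimes A_r$ using bilinearity and using the fact that an external tensor product of nontrivial irreducibles of different factors is again nontrivial under the product group (since the characters separate trivial and nontrivial pieces), one sees that the $\G$-fixed subspace picks up exactly the term where every factor is trivial, which is $A_1^{G_1}\otimes\cdots\otimes A_r^{G_r}$. Alternatively, one can proceed by iteration: take invariants under $G_1$ first (which commutes with tensoring with the $G_2\times\cdots\times G_r$-module $A_2\otimes\cdots\otimes A_r$ because the $G_1$-action is only on the first factor), then under $G_2$, and so on.

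Second, I would apply Proposition \ref{prop_from_SchurWeyl} to each factor $T_{n_i}(\bfb)$ with $\bfb = 0^m 1^m$. The proposition furnishes a spanning set $\{t_{n_i}(\sigma)\mid\sigma\in S_m\}$ for $T_{n_i}(\bfb)^{\GL(n_i)}$, which is in fact a basis when $n_i\geq m$. Combining via the lemma, the family of pure tensors
\[
  t(\bfs)=t_{n_1}(\sigma_1)\otimes\cdots\otimes t_{n_r}(\sigma_r),\qquad \bfs=(\sigma_1,\dots,\sigma_r)\in S_m^r,
\]
spans $\bigl(T_{n_1}(\bfb)\otimes\cdots\otimes T_{n_r}(\bfb)\bigr)^{\G}$. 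When every $n_i\geq m$, each factor set is linearly independent, and since a tensor product of linearly independent families is linearly independent, $\{t(\bfs)\}$ is a basis. Outside this range, linear independence can fail in individual factors, but the spanning conclusion is preserved because surjectivity of the map $\mathbb{C}[S_m]\twoheadrightarrow\End_{\GL(n_i)}(\otimes^m \mathbb{C}^{n_i})$ in Schur--Weyl duality holds for all $n_i$.

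The main obstacle is purely conceptual rather than computational: one must be comfortable with the identification $(A\otimes B)^{G\times H}=A^G\otimes B^H$, which uses reductivity of the $\GL(n_i)$. Once this is in hand, the proof is a clean assembly of Proposition \ref{prop_from_SchurWeyl} across the $r$ factors, and no further calculation with the explicit tensors $t_n(\sigma)$ is needed.
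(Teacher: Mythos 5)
Your proposal is correct and follows essentially the same route as the paper: reduce to a factor-by-factor statement via the identity $(A_1\otimes\cdots\otimes A_r)^{G_1\times\cdots\times G_r}=A_1^{G_1}\otimes\cdots\otimes A_r^{G_r}$, then invoke Proposition \ref{prop_from_SchurWeyl} on each factor and use that a tensor product of bases (resp.\ spanning sets) is a basis (resp.\ spanning set). The only difference is that you spell out and justify the invariants-of-a-tensor-product identity, which the paper uses implicitly when it sets $V_i=T_{n_i}(\bfb)^{\GL(n_i)}$.
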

\begin{proof}  Given a finite collection of finite dimensional vector spaces $V_1, \cdots, V_r$, if $B_i$ is a basis for $V_i$ then $\{ v_1 \ot \cdots \ot v_r | v_j \; \in B_j \mbox{ for all } 1\leq j \leq r \}$ is a basis for $\bigotimes_{i=1}^r V_i$.  Apply to the situation where $V_i = T_{n_i}(\bfb)^{\GL(n_i)}$, and $B_i = \{ t^i_\sigma | \sigma \in S_m \}$.  Apply Proposition \ref{prop_from_SchurWeyl}.
\end{proof}
\bigskip
We now turn to the situation where $\bfb$ is not necessarily $0^m1^m$.  If $M$ is an $m$-element subset of $\{1, \cdots, 2m \}$ define
\[
    \bfb_M = (b_1, \cdots, b_{2m}) \mbox{ where: }
    b_j = \left\{
            \begin{array}{ll}
              1, & \hbox{$j \in M;$} \\
              0, & \hbox{$j \notin M.$}
            \end{array}
          \right.
\]  Let $\ga_M \in S_{2m}$ denote a (indeed any\footnote{Note that there are $(m!)^2$ possible choices for $\gamma_M$}) permutation of $\{1, \cdots, 2m \}$ such that $\ga_M$ permutes the coordinates of $0^m1^m$ to obtain $b_M$.
\begin{lemma}\label{lemma_other_b_iso}  For any $m$-element subset, $M$, of $\{1, \cdots, 2m\}$ there exists an isomorphism of $\GL(n)$-representations,
\[
    \Psi^M_n : T_{n}( 0^m1^m ) \rightarrow T_{n}( \bfb_M  )
\]
\end{lemma}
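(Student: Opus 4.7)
The plan is to construct $\Psi^M_n$ as the canonical tensor-factor-reordering map induced by $\gamma_M$, and then to observe that it is automatically $\GL(n)$-equivariant because $\GL(n)$ acts diagonally (the inner action) on every tensor factor.

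More concretely, write the source as
\[
    T_n(0^m 1^m) = W_1 \otimes W_2 \otimes \cdots \otimes W_{2m}, \qquad W_j = (\bbC^n)_{(0^m1^m)_j},
\]
and the target as
\[
    T_n(\bfb_M) = W_1' \otimes W_2' \otimes \cdots \otimes W_{2m}', \qquad W_i' = (\bbC^n)_{(\bfb_M)_i}.
\]
By the defining property of $\gamma_M$, we have $W_i' = W_{\gamma_M^{-1}(i)}$ (after choosing the convention that $\gamma_M$ sends position $j$ of $0^m 1^m$ to position $\gamma_M(j)$ of $\bfb_M$). I would then define $\Psi^M_n$ on simple tensors by
\[
    \Psi^M_n\bigl( w_1 \otimes \cdots \otimes w_{2m} \bigr)
    = w_{\gamma_M^{-1}(1)} \otimes \cdots \otimes w_{\gamma_M^{-1}(2m)},
\]
and extend by linearity. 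This is a bijection on simple tensors (its inverse uses $\gamma_M$ instead of $\gamma_M^{-1}$), so it is a linear isomorphism of vector spaces.

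Next, I would check $\GL(n)$-equivariance. For $g \in \GL(n)$, the inner action on either side is $g \cdot (u_1 \otimes \cdots \otimes u_{2m}) = (g \cdot u_1) \otimes \cdots \otimes (g \cdot u_{2m})$, where the action of $g$ on a factor $(\bbC^n)_0 = \bbC^n$ is the standard one and on $(\bbC^n)_1 = (\bbC^n)^*$ is the contragredient. The point is that the action of $g$ on the $j$-th factor only depends on the label $b_j \in \{0,1\}$ of that factor, not on its position. Because $\gamma_M$ matches the labels between source and target by construction, permuting factors commutes with $g$, and hence $\Psi^M_n(g \cdot v) = g \cdot \Psi^M_n(v)$ for all $v$.

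There is essentially no obstacle beyond bookkeeping. The one subtlety worth flagging is that $\gamma_M$ is far from unique (one has $(m!)^2$ valid choices, as noted in the footnote), so different choices of $\gamma_M$ yield different isomorphisms $\Psi^M_n$. This is harmless for the lemma as stated, since it only asserts the existence of some isomorphism, but it should be kept in mind when $\Psi^M_n$ is invoked later; the spanning/basis statements that follow must not depend on the choice, a point I would verify at the moment of use rather than here.
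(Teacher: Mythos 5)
Your proposal is correct and is essentially the paper's own argument, which simply defines $\Psi^M_n$ as the permutation of tensor factors by $\gamma_M$ and notes that such a permutation preserves the isomorphism class under the inner (diagonal) action; you have just written out the equivariance check and the bookkeeping explicitly. Your remark about the $(m!)^2$-fold non-uniqueness of $\gamma_M$ matches the paper's own footnote and is a sensible caution.
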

\begin{proof}  Permuting tensor factors does not change the isomorphism class of an inner tensor product.  Define $\Psi^M_n$ so as to permute the tensor factors using $\ga_M$.
\end{proof}
\begin{prop}\label{prop_other_b_basis_Tn}
Given $m$ and $n \geq m$, we have that for any $m$-element subset, $M$, of $\{1, \cdots, 2m \}$, a basis for the $\GL(n)$-invariants in $T_{n}(\bfb_M)$ is given by
    \[\left\{\Psi^M_n \left(t_n(\sigma) \right)| \sigma \in S_m \right\}. \]
Otherwise, outside of these inequalities, the above are a spanning set.
\end{prop}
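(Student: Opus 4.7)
The plan is to reduce everything to Proposition \ref{prop_from_SchurWeyl} via the isomorphism produced in Lemma \ref{lemma_other_b_iso}. The main observation is that any isomorphism of $G$-representations $\Psi : U \to W$ restricts to a vector space isomorphism $\Psi|_{U^G} : U^G \to W^G$ of invariant subspaces, and hence carries a basis (respectively, a spanning set) of $U^G$ to a basis (respectively, a spanning set) of $W^G$.

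First I would invoke Lemma \ref{lemma_other_b_iso} to obtain the isomorphism of $\GL(n)$-representations
\[
    \Psi^M_n : T_n(0^m 1^m) \longrightarrow T_n(\bfb_M),
\]
which is concretely realized by permuting tensor factors according to $\ga_M$. Since $\GL(n)$ acts diagonally on each tensor product and the permutation of factors commutes with this diagonal action, $\Psi^M_n$ is indeed $\GL(n)$-equivariant. Equivariance guarantees that $\Psi^M_n$ sends $\GL(n)$-invariant tensors to $\GL(n)$-invariant tensors, and that its inverse does likewise, so $\Psi^M_n$ restricts to a linear isomorphism $T_n(0^m 1^m)^{\GL(n)} \to T_n(\bfb_M)^{\GL(n)}$.

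Next I would apply Proposition \ref{prop_from_SchurWeyl} to the source: when $n \geq m$, the set $\{t_n(\sigma) : \sigma \in S_m\}$ is a basis for $T_n(0^m 1^m)^{\GL(n)}$, and in general it is at least a spanning set. Applying $\Psi^M_n$ to this basis (respectively spanning set) and using the fact that a linear isomorphism preserves both independence and spanning, the set $\{\Psi^M_n(t_n(\sigma)) : \sigma \in S_m\}$ is a basis (respectively spanning set) for $T_n(\bfb_M)^{\GL(n)}$, which is exactly the claim.

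There is essentially no obstacle here beyond verifying the two routine points above: that $\Psi^M_n$ really is $\GL(n)$-equivariant (which is immediate from the diagonal nature of the action, so that the choice of $\ga_M$ does not matter for the invariant subspace), and that a $G$-equivariant linear isomorphism restricts to an isomorphism of invariant subspaces. The mild subtlety worth mentioning explicitly is that although $\ga_M$ is not unique — there are $(m!)^2$ choices — different choices of $\ga_M$ yield different isomorphisms $\Psi^M_n$, but each one still carries a basis to a basis, so the statement of the proposition is independent of this choice.
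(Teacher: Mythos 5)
Your proof is correct and follows exactly the paper's own (much terser) argument: transport the basis/spanning set of Proposition \ref{prop_from_SchurWeyl} through the $\GL(n)$-equivariant isomorphism $\Psi^M_n$ of Lemma \ref{lemma_other_b_iso}. Your added remarks on equivariance and the non-uniqueness of $\ga_M$ are accurate elaborations of what the paper leaves implicit.
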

\begin{proof}
Follows immediately from the isomorphism $\Psi^M_n$ (Lemma \ref{lemma_other_b_iso}) and the statement of Proposition \ref{prop_from_SchurWeyl}.
\end{proof}
For $\bfs = (\sigma_1, \cdots, \sigma_r) \in S_m^r$ define
  $t^M_{\bfn}(\bfs) = \Psi^M_{n_1}(t_{n_1}(\sigma_1)) \ot \cdots \ot \Psi^M_{n_r}(t_{n_r}(\sigma_r))$.
\begin{cor}\label{cor_other_b_basis_bfn}  Given $m$ and $\bfn$ such that for all $i$ with $1\leq i \leq r$ we have $n_i \geq m$, then for all $m$-element subsets, $M$, of $\{1, \cdots, 2m\}$ the set $\{ t^M_{\bfn}(\bfs) | \bfs \in S_m^r \}$ is a basis for the $\G$-invariants in $\bigotimes_{i=1}^r T_{n_i}(\bfb_M)$.  Otherwise, outside these inequalities, the above are a spanning set.
\end{cor}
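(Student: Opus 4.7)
The plan is to reduce Corollary \ref{cor_other_b_basis_bfn} to Proposition \ref{prop_other_b_basis_Tn} by exploiting the outer-action structure of $\G = \prod_{i=1}^r \GL(n_i)$ on the tensor product $\bigotimes_{i=1}^r T_{n_i}(\bfb_M)$. Since the factors $\GL(n_i)$ act on disjoint tensor slots and commute with one another, the standard fact about invariants of a direct product acting on a tensor product gives the identification
\[
    \left( \bigotimes_{i=1}^r T_{n_i}(\bfb_M) \right)^{\G} \;=\; \bigotimes_{i=1}^r T_{n_i}(\bfb_M)^{\GL(n_i)}.
\]
This step is formal: a pure tensor $v_1 \ot \cdots \ot v_r$ is $\G$-invariant iff each $v_i$ is $\GL(n_i)$-invariant, and linearity extends this to the full invariant subspace (Fubini-style, using that each $\GL(n_i)$ is reductive so projection onto invariants is a well-defined idempotent operator commuting with the others).

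Next I would apply Proposition \ref{prop_other_b_basis_Tn} factorwise: for each $i$, the set $B_i = \{ \Psi^M_{n_i}(t_{n_i}(\sigma)) : \sigma \in S_m \}$ is a basis for $T_{n_i}(\bfb_M)^{\GL(n_i)}$ when $n_i \geq m$, and otherwise is a spanning set. Then, invoking the elementary fact that for finite-dimensional vector spaces $V_1, \ldots, V_r$ with bases (resp.\ spanning sets) $B_i$, the collection $\{ v_1 \ot \cdots \ot v_r : v_i \in B_i \}$ is a basis (resp.\ spanning set) for $\bigotimes_{i=1}^r V_i$, we obtain that
\[
    \{ t^M_\bfn(\bfs) : \bfs \in S_m^r \} \;=\; \{ \Psi^M_{n_1}(t_{n_1}(\sigma_1)) \ot \cdots \ot \Psi^M_{n_r}(t_{n_r}(\sigma_r)) : (\sigma_1,\ldots,\sigma_r) \in S_m^r \}
\]
is a basis (resp.\ spanning set) for $\bigotimes_{i=1}^r T_{n_i}(\bfb_M)^{\GL(n_i)}$, under the stated hypotheses on $\bfn$.

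There is essentially no obstacle here, since the corollary is a direct product/tensor-product packaging of Proposition \ref{prop_other_b_basis_Tn}; the only point requiring care is the invariant-decomposition identity for the outer action, but this is immediate from the commuting action structure and the fact that $\GL(n_i)$ acts trivially on the tensor slots belonging to $\GL(n_j)$ for $j \neq i$. The proof would be two or three lines, exactly as suggested by the analogous Corollary \ref{cor_01b_basis_bfn} (whose proof I would mirror, with $t_{n_i}(\sigma)$ replaced by $\Psi^M_{n_i}(t_{n_i}(\sigma))$).
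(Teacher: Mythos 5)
Your proposal is correct and is essentially the paper's argument: the authors' proof simply applies the isomorphisms $\Psi^M_{n_i}$ factorwise to Corollary \ref{cor_01b_basis_bfn}, whereas you re-run the proof of that corollary (tensor product of bases of the factorwise invariants, justified by reductivity) with Proposition \ref{prop_other_b_basis_Tn} in place of Proposition \ref{prop_from_SchurWeyl} --- the same ingredients packaged in a slightly different order. No gap; your explicit remark about the identification $\bigl( \bigotimes_{i} T_{n_i}(\bfb_M) \bigr)^{\G} = \bigotimes_{i} T_{n_i}(\bfb_M)^{\GL(n_i)}$ is the one point the paper leaves implicit in both corollaries.
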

\begin{proof}
Apply the isomorphism $\Psi^M_{n_i}$ to each tensor factor in Corollary \ref{cor_01b_basis_bfn}.
\end{proof}

We note that when $\bfb = \bfb_M$,  $t^M_\bfn(\bfs)$ is in the range of $\Phi^\bfb_\bfn$, defined above Lemma \ref{lemma_equal_01}. Finally, set $\phi^M_{\bfn}(\bfs) = (\Phi^{\bfb_M}_\bfn)^{-1}\left( t^M_{\bfn}(\bfs) \right)$.  We obtain
\begin{cor}\label{cor_basis_T(n,b)}  Given $m$ and $\bfn$ such that for all $i$ with $1\leq i \leq r$ we have $n_i \geq m$, a basis for the $\G$-invariants in $\mcT{2m}$ is
\[
    \left\{ \phi^M_{\bfn}(\bfs) | \bfs \in S_m^r \mbox{ and }M \subseteq \{1,\cdots,2m\} \mbox{ with }|M|=m \right\}
\] Otherwise, outside of these inequalities, the above are a spanning set.
  Therefore, 
\[
    \dim \left(\mcT{2m} \right)^\G \leq  \binom{2m}{m}(m!)^r.
\] with equality holding exactly when $n_i \geq m$ for all $1 \leq i \leq r$.
\end{cor}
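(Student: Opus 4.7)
The plan is to assemble the result directly from the decomposition $\mcT{2m} = \bigoplus_\bfb \T(\bfn, \bfb)$ and the preceding corollaries. Since $\G$ acts on $\mcT{2m}$ preserving this direct sum decomposition (each summand $\T(\bfn,\bfb)$ is $\G$-stable), taking invariants commutes with the direct sum, so
\[
    \mcT{2m}^\G = \bigoplus_{\bfb \in \{0,1\}^{2m}} \T(\bfn, \bfb)^\G.
\]
First I would invoke Lemma \ref{lemma_equal_01} to discard every summand for which $|\bfb|_0 \neq |\bfb|_1$; the surviving summands are exactly those indexed by sequences $\bfb_M$, as $M$ ranges over the $m$-element subsets of $\{1,\cdots,2m\}$.

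Next, for each fixed $M$, I would apply Corollary \ref{cor_other_b_basis_bfn} to conclude that $\{t^M_\bfn(\bfs) : \bfs \in S_m^r\}$ is a basis (when each $n_i \geq m$) or spanning set (in general) for the $\G$-invariants in $T_{n_1}(\bfb_M) \ot \cdots \ot T_{n_r}(\bfb_M)$. Since $\Phi^{\bfb_M}_\bfn$ is a $\G$-equivariant isomorphism, its inverse sends this collection to a corresponding basis or spanning set of $\T(\bfn,\bfb_M)^\G$, namely $\{\phi^M_\bfn(\bfs) : \bfs \in S_m^r\}$.

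Finally, I would union these sets over the $\binom{2m}{m}$ allowed subsets $M$. Because the $\T(\bfn,\bfb_M)^\G$ are distinct summands in a direct sum, the union is automatically linearly independent whenever each component is, which yields the asserted basis in the range $n_i \geq m$ and a spanning set in general. Counting gives the dimension bound
\[
    \dim \mcT{2m}^\G \leq \binom{2m}{m} \cdot (m!)^r,
\]
with equality precisely when all the spanning sets from Corollary \ref{cor_other_b_basis_bfn} are actually bases, i.e.\ when $n_i \geq m$ for all $i$. There is no real obstacle here: the proof is a bookkeeping assembly of Lemma \ref{lemma_equal_01}, Corollary \ref{cor_other_b_basis_bfn}, and the isomorphism $\Phi^{\bfb_M}_\bfn$; the only point requiring any care is noting that invariants distribute over the direct sum so that bases of summands concatenate to a basis of the whole.
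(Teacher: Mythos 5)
Your proof is correct and is exactly the assembly the paper intends: the paper omits a proof of this corollary precisely because it follows, as you show, from the $\G$-stable decomposition $\mcT{2m} = \bigoplus_\bfb \T(\bfn,\bfb)$, Lemma \ref{lemma_equal_01} to discard the unbalanced $\bfb$, and Corollary \ref{cor_other_b_basis_bfn} transported through the isomorphism $\Phi^{\bfb_M}_\bfn$ on each of the $\binom{2m}{m}$ surviving summands. The only point worth making explicit is that the ``exactly when'' in the dimension claim uses the failure of linear independence of the $t_n(\sigma)$ for $n<m$, which comes from the ``if and only if'' in the Schur--Weyl theorem as stated.
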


\section{Invariants in symmetric algebra}\label{sec_symtensor}

For a complex vector space $V$, let $\bot V = \bigoplus_{d \geq 0} \bot^d V$ and $\mcS(V) = \bigoplus_{d=0}^\infty \mcS^d(V)$ denote the $\bbN$-graded tensor and symmetric algebras respectively.  Recall that $\mcS(V)$ is defined as the quotient of the tensor algebra by the two sided ideal, $\langle x\ot y-y\ot x | x,y \in V \rangle$.  The map $p:\bot(V) \rightarrow \mcS(V)$ defined by
\[
   p( x_1 \ot x_2 \ot \cdots \ot x_d )= x_1 \cdot x_2 \cdot \cdots \cdot x_d
\] on $\bot^d V$ defines a surjective homomorphism of graded associative $\bbC$-algebras.  Note, of course, that the product on $\bot(V)$, denoted by $\ot$, is non-commutative, while the product on $\mcS(V)$, denoted by $\cdot$, is commutative.

\begin{prop}\label{prop_spanning_invariants} Given $m$ and $\bfn$ such that for all $1\leq i \leq r$ we have $n_i \geq m$, the set
\[
    \left\{ p\left( \phi^M_{\bfn}(\bfs)  \right) | \bfs \in S_m^r \mbox{ and }M \subseteq \{1,\cdots,2m\} \mbox{ with }|M|=m \right\}
\] spans $\mcS^{2m}(\V \oplus \V^*)^\G$.
\end{prop}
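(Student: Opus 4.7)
The plan is to reduce the statement to the analogous fact already proved in the tensor algebra, namely Corollary \ref{cor_basis_T(n,b)}, by exploiting the canonical surjection $p:\bot(\V\oplus\V^*)\to \mcS(\V\oplus \V^*)$. Two properties of $p$ are needed: that $p$ is $\G$-equivariant with respect to the inner actions on both sides, and that the induced map $p:(\mcT{2m})^\G \to \mcS^{2m}(\V\oplus\V^*)^\G$ is surjective. Granting these, any $F\in\mcS^{2m}(\V\oplus\V^*)^\G$ lifts to some $\tilde F\in(\mcT{2m})^\G$; by Corollary \ref{cor_basis_T(n,b)}, $\tilde F$ is a linear combination of the elements $\phi^M_\bfn(\bfs)$ (this is where the hypothesis $n_i\geq m$ enters); applying $p$ expresses $F$ as the corresponding linear combination of the $p(\phi^M_\bfn(\bfs))$, as required.

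The $\G$-equivariance of $p$ is immediate from its construction, since the inner action of $\G$ respects each tensor factor individually and therefore descends to the symmetric quotient. For the surjectivity of $p$ on invariants, the plan is to construct an explicit $\G$-equivariant section $\mbox{sym}:\mcS^{2m}(\V\oplus\V^*)\to\mcT{2m}$ by total symmetrization,
\[
\mbox{sym}(w_1\cdots w_{2m})\;=\;\frac{1}{(2m)!}\sum_{\tau\in S_{2m}} w_{\tau(1)}\ot w_{\tau(2)}\ot\cdots\ot w_{\tau(2m)}.
\]
The right-hand side is symmetric in the $w_i$, so the formula descends to a well-defined linear map. One checks $p\circ\mbox{sym}=\mbox{id}$, and $\mbox{sym}$ is $\G$-equivariant because the $S_{2m}$-action permuting tensor factors commutes with the diagonal inner action of $\G$. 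Hence $\mbox{sym}$ carries $\G$-invariants to $\G$-invariants, which yields the surjectivity of $p$ on invariants. (Equivalently, since $\G$ is reductive, $\mcS^{2m}(\V\oplus\V^*)$ is a $\G$-stable direct summand of $\mcT{2m}$, and projection onto this summand is a $\G$-equivariant retraction of $p$; this is the standard Reynolds-operator viewpoint.)

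The only substantive point in the argument is the surjectivity of $p$ restricted to invariants; everything else is a formal transport through the algebra homomorphism $p$ together with an appeal to Corollary \ref{cor_basis_T(n,b)}. I expect no further obstacle, and indeed the same argument would yield a basis rather than merely a spanning set provided one knew linear independence of the $p(\phi^M_\bfn(\bfs))$ downstairs; in the present proposition only the spanning assertion is claimed, which is precisely what the two-step argument above delivers.
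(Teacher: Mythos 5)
Your proposal is correct and follows essentially the same route as the paper: the paper's proof is the one-line observation that since $\G$ is reductive, $p$ restricted to $\G$-invariants maps onto the $\G$-invariants of $\mcS^{2m}(\V\oplus\V^*)$, which combined with Corollary \ref{cor_basis_T(n,b)} gives the spanning claim. Your explicit symmetrization section is a more hands-on justification of that same surjectivity (and you note the reductivity argument yourself), so there is no substantive difference.
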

\begin{proof}
The group $\G$ is reductive, therefore the restriction of $p$ to the $\G$-invariants maps onto the $\G$-invariants in $\mcS^{2m}(\V \oplus \V^*)$.
\end{proof}

In light of Proposition \ref{prop_spanning_invariants}, we now investigate the dependence of $p\left( \phi^M_{\bfn}(\bfs)  \right)$ on the subset $M$.  In fact we have

\begin{prop}\label{prop_no_M_needed}  For all $m$ and $\bfn$, if $M$ is an $m$-element subset of $\{1, 2, \cdots, 2m\}$ then for all $\bfs \in S_m^r$ we have
\[
    p\left( \phi^{(0^m 1^m)}_{\bfn}(\bfs)  \right) = p\left( \phi^{M}_{\bfn}(\bfs)  \right).
\]
\end{prop}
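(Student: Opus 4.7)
The plan is to identify $\phi^{M}_{\bfn}(\bfs)$ as the image of $\phi^{(0^m 1^m)}_{\bfn}(\bfs)$ under a permutation of the $2m$ outer tensor factors of $\mcT{2m}$, and then to invoke the fact that such permutations are invisible to $p$ because the product in $\mcS(\V \oplus \V^*)$ is commutative.

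First, I would introduce the natural action of $S_{2m}$ on $\mcT{2m} = \bigotimes^{2m}(\V \oplus \V^*)$ by permutation of factors: for $\tau \in S_{2m}$, set $\pi(\tau)(v_1 \ot \cdots \ot v_{2m}) = v_{\tau^{-1}(1)} \ot \cdots \ot v_{\tau^{-1}(2m)}$. Commutativity of $\mcS(\V \oplus \V^*)$ gives $p \circ \pi(\tau) = p$ for every $\tau \in S_{2m}$. By the defining property of $\gamma_M$, the operator $\pi(\gamma_M)$ carries the summand $\T(\bfn, 0^m 1^m) \subseteq \mcT{2m}$ onto the summand $\T(\bfn, \bfb_M)$. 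The technical core of the argument is the commutativity of the diagram
\[
    \begin{array}{ccc}
        \T(\bfn, 0^m 1^m) & \xrightarrow{\;\pi(\gamma_M)\;} & \T(\bfn, \bfb_M) \\
        {\scriptstyle \Phi^{0^m 1^m}_{\bfn}}\Big\downarrow & & \Big\downarrow {\scriptstyle \Phi^{\bfb_M}_{\bfn}} \\
        \bigotimes_{i=1}^{r} T_{n_i}(0^m 1^m) & \xrightarrow{\;\bigotimes_{i} \Psi^{M}_{n_i}\;} & \bigotimes_{i=1}^{r} T_{n_i}(\bfb_M),
    \end{array}
\]
which is routine bookkeeping on pure tensors: the top path applies $\gamma_M$ to the outer slots of $\T(\bfn, 0^m 1^m)$ and then regroups by the sub-factor index $i$, while the bottom path first regroups and then applies the same $\gamma_M$ within each of the $r$ blocks; both routes clearly produce the same element.

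Once the diagram is established, evaluating at $t(\bfs) = \bigotimes_{i} t_{n_i}(\sigma_i)$ and reading off the definitions gives $\pi(\gamma_M)\!\left(\phi^{(0^m 1^m)}_{\bfn}(\bfs)\right) = \phi^{M}_{\bfn}(\bfs)$. Applying $p$ to both sides and using $p \circ \pi(\gamma_M) = p$ then yields the proposition. The only real obstacle is keeping the permutation conventions straight in the diagram check; no ideas beyond the commutativity of $\mcS(\V \oplus \V^*)$ are required. As a bonus, this argument shows that $p\left(\phi^{M}_{\bfn}(\bfs)\right)$ does not depend on the (non-unique) choice of $\gamma_M$ used to define $\Psi^M_{n_i}$, removing an apparent ambiguity from earlier in the section.
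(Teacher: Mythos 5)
Your proposal is correct and is essentially the paper's own proof: the paper likewise observes that $p$ is invariant under permutation of tensor factors and that $(\Phi^{\bfb_M}_\bfn)^{-1} \circ (\Psi^M_{n_1} \ot \cdots \ot \Psi^M_{n_r}) \circ \Phi^{(0^m1^m)}_\bfn$ is exactly the permutation of the $2m$ outer factors by $\gamma_M$, which is the content of your commutative diagram. Your write-up just makes the bookkeeping (and the independence from the choice of $\gamma_M$) more explicit.
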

\begin{proof}  In general, if $V$ is a vector space and $v_1, \cdots, v_d \in V$ then
we have $p(v_1 \ot \cdots \ot v_d) =p(v_{\sigma(1)} \ot \cdots \ot v_{\sigma(d)})$ for any permutation $\sigma \in S_d$.  The map $(\Phi^{\bfb_M}_\bfn)^{-1} \circ \left( \Psi^M_{n_1} \ot \cdots \ot \Psi^M_{n_r} \right) \circ \Phi^{\bfb_M}_\bfn$ defines a permutation of tensor factors according to $\gamma_M$.
\end{proof}

Given $\bfs = (\sigma_1, \cdots, \sigma_r) \in S_m^r$ we define $F_{\bfs} = p( \phi^{(0^m1^m)}_\bfs )$ which is equal to the product
\begin{equation}\label{eqn_bigproduct}
\begin{array}{cc}
    & \left[\left( e_{i^{         (1 )}_1} \ot \cdots \ot e_{i^{         (1 )}_r}\right) \cdot
     \left( e^{i^{(\sigma_1(1))}_1} \ot \cdots \ot e^{i^{(\sigma_{r-1}(1))}_{r-1}} \ot e^{i^{(\sigma_r(1))}_r}\right)\right] \cdot \\
    & \vdots \\
    & \cdot \left[\left( e_{i^{         (m )}_1} \ot \cdots \ot e_{i^{         (m )}_r}\right) \cdot
     \left( e^{i^{(\sigma_1(m))}_1} \ot \cdots \ot e^{i^{(\sigma_{r-1}(m))}_{r-1}} \ot e^{i^{(\sigma_r(m))}_r}\right)\right]
\end{array}
\end{equation}
summed over all ordered $m$-tuples of $r$-tuples of indices $i^{(j)}_{k}$ with $1 \leq i^{(j)}_{k} \leq n_k$ where $1 \leq j \leq m$ and $1 \leq k \leq r$.

The product, $\cdot$, is commutative.  A consequence of this fact is that the left (resp. right) factor in the rows may be permuted.  That is we may replace $(\sigma_1, \cdots, \sigma_r)$ with $(\ga \sigma_1, \cdots, \ga \sigma_r)$ for any $\ga \in S_m$.  If $\ga = \sigma_r^{-1}$, we may reduce to the assumption that $\sigma_r = 1$ (the identity permutation).  This means that the product \ref{eqn_bigproduct} is equal to:

\begin{equation}\label{eqn_bigproduct2}
\begin{array}{cc}
    & \left[\left( e_{i^{         (1 )}_1} \ot \cdots \ot e_{i^{         (1 )}_r}\right) \cdot
     \left( e^{i^{(\sigma_1(1))}_1} \ot \cdots \ot e^{i^{(\sigma_{r-1}(1))}_{r-1}} \ot e^{i^{(1)}_r}\right)\right] \cdot \\
    & \vdots \\
    & \cdot \left[\left( e_{i^{         (m )}_1} \ot \cdots \ot e_{i^{         (m )}_r}\right) \cdot
     \left( e^{i^{(\sigma_1(m))}_1} \ot \cdots \ot e^{i^{(\sigma_{r-1}(m))}_{r-1}} \ot e^{i^{(m)}_r}\right)\right]
\end{array}
\end{equation}
Note that \emph{any one} of the permutations $\sigma_1, \cdots, \sigma_r$ could be assumed to be the identity.  We have arbitrarily chosen $\sigma_r = 1$.

The domain of the permutations $\sigma_1, \cdots, \sigma_r$ is the set $\{1, \cdots, m \}$.  Noting again that the product, $\cdot$, in the symmetric algebra is commutative, we see that the ``rows'' of the above expression may be permuted without changing the expression.  Permuting the rows corresponds to a simultaneous permutation of the domain of each $\sigma_k$.  That is, we may simultaneously conjugate $\sigma_1, \cdots, \sigma_r$ (since relabeling a permutation's domain corresponds to conjugating the permutation).  We obtain that for any $\bfs=(\sigma_1, \cdots, \sigma_r) \in S_m^r$ and $\tau \in S_m$ we have $F_{(\sigma_1, \cdots, \sigma_r)} = F_{(\tau \sigma_1 \tau^{-1}, \cdots, \tau \sigma_r \tau^{-1})}$.

The commutative $\bbC$-algebra of polynomial functions on a vector space $\bbV$ is naturally isomorphic to $\mcS(\bbV^*)$.  If $\bbV = \V \oplus \V^*$ then $\bbV$ is self-dual.  Therefore, we obtain a natural isomorphism between $\mcS(\V \oplus \V^*)$ and $\mcP( \V \oplus \V^*)$.

Next, we explicitly describe the value of each $F_\bfs$ on the vector space $\V \oplus \V^*$.
A general elements $\V$ is of the form
\[
    X = \sum_{ \twoline{1 \leq i_j \leq n_j}{1 \leq j \leq r}}
    x^{i_1 i_2 \cdots i_r} e_{i_1} \ot \cdots \ot e_{i_r},
\]
where $x^{i_1 i_2 \cdots i_r} \in \bbC$ while a general element of $\V^*$ is of the form
\[
    Y = \sum_{ \twoline{1 \leq i_j \leq n_j}{1 \leq j \leq r}}
    y_{i_1 i_2 \cdots i_r} e^{i_1} \ot \cdots \ot e^{i_r}.
\] where $y_{i_1 i_2 \cdots i_r}\in \bbC$.
The value of $\ot_{j=1}^r e^{k_j} \in \mcP^1(\V)$ on $\V \oplus \V^*$ is $x^{j_1 j_2 \cdots j_r}$, while the value of $\ot_{j=1}^r e_{k_j} \in \mcP^1(\V^*)$ on $\V \oplus \V^*$ is $y_{i_1 i_2 \cdots i_r}$.

Therefore, given $(\sigma_1, \cdots, \sigma_{r-1}) \in S_m^{r-1}$, let $f_\bfs$ denote the polynomial function on $\V \oplus \V^*$ corresponding to $F_\bfs$ with $\bfs = (\sigma_1, \cdots, \sigma_{r-1}, \sigma_r)$ with $\sigma_r = 1$.  That is, the value of $f_\bfs$ on $(X,Y)$ is
\[
     \left[\left( x^{i^{         (1 )}_1 \cdots i^{         (1 )}_r}\right)
     \left( y_{i^{(\sigma_1(1))}_1 \cdots i^{(\sigma_r(1))}_r}\right)\right]
     \cdots
     \left[\left( x^{i^{         (m )}_1 \cdots i^{         (m )}_r}\right)
     \left( y_{i^{(\sigma_1(m))}_1 \cdots i^{(\sigma_r(m))}_r}\right)\right]
\] summed over all ordered $m$-tuples of $r$-tuples of indices $i^{(j)}_{k}$ with $1 \leq i^{(j)}_{k} \leq n_k$ where $1 \leq j \leq m$ and $1 \leq k \leq r$.

\begin{thm}\label{thm_basis_poly} Given $m$ and $r$, let $S_m^{r-1} / S_m$ denote the orbits of $S_m$ on the set $S_m^{r-1}$ under the action of simultaneous conjugation.  Then let $t = |S_m^{r-1} / S_m|$, and choose distinct representatives from each $S_m$-orbit, $\bfs_1, \cdots \bfs_t$.  For any $\bfn$ with $n_i \geq m$ for all $1\leq i \leq r$, the set
\[ \mc B = \{ f_{\bfs_1}, f_{\bfs_2}, \cdots, f_{\bfs_t} \} \]
is a basis for the $\G$-invariants in $\mcS^{2m}(\V \oplus \V^*)$.  Otherwise, outside of these inequalities, the above are a spanning set.
\end{thm}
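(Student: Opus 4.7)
The plan is to obtain $\mc B$ as a spanning set by combining the tensor-algebra basis of Corollary \ref{cor_basis_T(n,b)} with two symmetrization identifications, and then to promote ``spanning'' to ``basis'' in the stable range $n_i \geq m$ by a dimension count.

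First, Proposition \ref{prop_spanning_invariants} says that the projection $p$ onto the symmetric algebra sends the generators $\phi^M_\bfn(\bfs)$ onto a spanning set of $\mcS^{2m}(\V \oplus \V^*)^\G$. Proposition \ref{prop_no_M_needed} collapses the dependence on $M$, so the reduced spanning set is $\{F_\bfs : \bfs \in S_m^r\}$ with $F_\bfs = p\bigl(\phi^{(0^m 1^m)}_\bfn(\bfs)\bigr)$. Commutativity of $\cdot$ in $\mcS(\V \oplus \V^*)$ then supplies two further identifications, made explicit in equations \eqref{eqn_bigproduct} and \eqref{eqn_bigproduct2}: left-multiplying every $\sigma_i$ by a common $\gamma \in S_m$ amounts to permuting the ``right factors'' within each of the $m$ rows of the symmetric product, while conjugating every $\sigma_i$ simultaneously by $\tau \in S_m$ amounts to permuting the $m$ rows themselves. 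Combining, $F_\bfs = F_{\alpha \bfs \beta^{-1}}$ for all $\alpha, \beta \in S_m$, so $F_\bfs$ depends only on the double coset $\Delta \bfs \Delta$. Passing through the bijection $\theta : \Delta \backslash S_m^r / \Delta \to S_m^{r-1}/S_m$ recalled in the introduction, and translating from the symmetric algebra to the isomorphic algebra of polynomial functions on $\V \oplus \V^*$, identifies each $F_\bfs$ with the function $f_\bfs$ defined above the theorem. Hence $\mc B$ spans $\mcS^{2m}(\V \oplus \V^*)^\G$ for any $\bfn$.

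Now assume $n_i \geq m$ for every $i$. Since $\mcPr(\V) \cong \mcP(\V \oplus \overline{\V})$ as graded $K(\bfn)$-representations and $\overline{\V} \cong \V^*$, one has $\dim \mcS^{2m}(\V \oplus \V^*)^\G = h_m(\bfn)$, and the spanning claim gives $h_m(\bfn) \leq t$. For the reverse inequality I invoke the stability theorem of \cite{HeroWillenbring} recalled in the introduction: $h_m(\bfn) \to \widetilde h_{m,r} = |S_m^{r-1}/S_m| = t$, and this stable value is already attained at $n_i \geq m$ because the Schur--Weyl homomorphism $\bbC[S_m] \to \End_{\GL(n)}(\bot^m \bbC^{n})$ of Theorem \ref{SWD} becomes an isomorphism precisely at $n \geq m$, which is what makes Corollary \ref{cor_basis_T(n,b)} sharp. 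We conclude $\dim \mcS^{2m}(\V \oplus \V^*)^\G = t = |\mc B|$, so $\mc B$ is a basis. The main obstacle I expect is justifying that the stable value of $h_m(\bfn)$ is reached exactly at $n_i \geq m$ rather than only asymptotically; the cleanest route is the one just sketched, that is, to exploit the sharpness of Corollary \ref{cor_basis_T(n,b)} at $n_i\geq m$ together with the two orbit identifications of the previous paragraph to see directly that they yield $t$ independent $f_\bfs$, or, alternatively as noted in the acknowledgments, to deduce the independence from the first fundamental theorem of classical invariant theory for $\GL$.
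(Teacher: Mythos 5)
Your proposal follows the paper's proof essentially verbatim: spanning comes from Propositions \ref{prop_spanning_invariants} and \ref{prop_no_M_needed} together with the two commutativity identifications (common left factor and simultaneous conjugation), and linear independence comes from the dimension count $t = \widetilde h_{m,r} = \dim \mcS^{2m}(\V \oplus \V^*)^{\G}$ imported from \cite{HeroWillenbring}. One caveat on your auxiliary patch: sharpness of Schur--Weyl at $n \geq m$ only controls $\dim \left(\mcT{2m}\right)^{\G}$, not the kernel of the symmetrization map $p$ restricted to the invariants, so it cannot by itself show the stable value is attained at $n_i \geq m$; the paper resolves this exactly as your primary invocation does, by citing \cite{HeroWillenbring} for the equality of $\dim \mcS^{2m}(\V \oplus \V^*)^{\G}$ with $\widetilde h_{m,r}$ in that range.
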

\begin{proof}
We have seen that $\mc B$ spans.  In \cite{HeroWillenbring} it is shown that $t = \widetilde h_{m,r} = \dim  \mcS^{2m}(\V \oplus \V^*)^\G$.  Therefore, $\mc B$ is linearly independent.
\end{proof}

\def\cprime{$'$} \def\cprime{$'$}
\begin{bibdiv}
\bibliographystyle{alphabetic}

\begin{biblist}
\bib{EPR}{article}{
   author={Einstein, A.},
   author={Podolsky, B.},
   author={Rosen, N.},
   title={Can quantum-mechanical description of physical reality be considered complete?},
   note={Quantum information theory},
   journal={Phys. Rev.},
   volume={47},
   date={1935},
   number={777}
}
\bib{Feynman}{article}{
   author={Feynman, Richard P.},
   title={Simulating physics with computers},
   note={Physics of computation, Part II (Dedham, Mass., 1981)},
   journal={Internat. J. Theoret. Phys.},
   volume={21},
   date={1981/82},
   number={6-7},
   pages={467--488},
   issn={0020-7748},
   review={\MR{658311}},
}
\bib{GW}{book}{
   author={Goodman, Roe},
   author={Wallach, Nolan R.},
   title={Symmetry, representations, and invariants},
   series={Graduate Texts in Mathematics},
   volume={255},
   publisher={Springer},
   place={Dordrecht},
   date={2009},
   pages={xx+716},
   isbn={978-0-387-79851-6},
   review={\MR{2522486}},
   doi={10.1007/978-0-387-79852-3},
}
\bib{HeroWillenbring}{article}{
   author={Hero, Michael W.},
   author={Willenbring, Jeb F.},
   title={Stable Hilbert series as related to the measurement of quantum entanglement},
   journal={To appear in Discrete Math.\\ \url{https://pantherfile.uwm.edu/jw/www/PAPERS/HW.pdf}}
}
\bib{graph_enum}{article}{
   author={Kwak, Jin Ho},
   author={Lee, Jaeun},
   title={Enumeration of graph coverings, surface branched coverings and
   related group theory},
   conference={
      title={Combinatorial \& computational mathematics},
      address={Pohang},
      date={2000},
   },
   book={
      publisher={World Sci. Publ., River Edge, NJ},
   },
   date={2001},
   pages={97--161},
   review={\MR{1868421 (2003b:05083)}},
}
\bib{KwakLee}{article}{
   author={Kwak, Jin Ho},
   author={Lee, Jaeun},
   title={Isomorphism classes of graph bundles},
   journal={Canad. J. Math.},
   volume={42},
   date={1990},
   number={4},
   pages={747--761},
   issn={0008-414X},
}
\bib{MW02}{article}{
   author={Meyer, David A.},
   author={Wallach, Noland},
   title={Invariants for multiple qubits: the case of 3 qubits},
   conference={
      title={Mathematics of quantum computation},
   },
   book={
      series={Comput. Math. Ser.},
      publisher={Chapman \& Hall/CRC, Boca Raton, FL},
   },
   date={2002},
   pages={77--97},
   review={\MR{2007943 (2004h:81034)}},
}
\bib{W05}{article}{
   author={Wallach, Nolan R.},
   title={The Hilbert series of measures of entanglement for 4 qubits},
   journal={Acta Appl. Math.},
   volume={86},
   date={2005},
   number={1-2},
   pages={203--220},
   issn={0167-8019},
   review={\MR{2134319 (2006c:81020)}}
}
\end{biblist}
\end{bibdiv}
\end{document}